\documentclass{amsart}
\usepackage[T1]{fontenc}
\usepackage{amsmath, amssymb, amsthm, amsfonts}
\usepackage{mathrsfs}


\usepackage{lmodern}
\usepackage{fontawesome5}

\usepackage{graphicx}
\usepackage[dvipsnames]{xcolor}
\usepackage{mdframed}
\usepackage{tikz-cd}
\usepackage{booktabs}
\usepackage{makecell}
\usepackage{comment}
\usepackage{siunitx}
\sisetup{table-alignment-mode = format, mode=text}

\usepackage{pgfplots}
\pgfplotsset{width=10cm,compat=1.18}
\usepackage{subcaption}

\usepgfplotslibrary{colormaps}
\pgfplotsset{
    colormap={blueorange}{rgb255=(0,114,178) rgb255=(86,180,233) rgb255=(240,228,66) rgb255=(230,159,0) rgb255=(213,94,0)}, 
    colormap={blueorange-mid}{rgb255=(213,94,0) rgb255=(230,159,0) rgb255=(240,228,66) rgb255=(86,180,233) rgb255=(0,114,178) rgb255=(0,114,178) rgb255=(86,180,233) rgb255=(240,228,66) rgb255=(230,159,0) rgb255=(213,94,0)}, 
    colormap={onlyblue}{rgb255=(0,114,178) rgb255=(86,180,233)},
    colormap={onlyorange}{rgb255=(213,94,0) rgb255=(230,159,0)},
}

\usepackage{fancyhdr}
\usepackage[hang,flushmargin]{footmisc}

\usepackage{enumitem}
\usepackage{mathtools}

\usepackage[alphabetic, initials, msc-links]{amsrefs}
\makeatletter
\renewcommand{\BibLabel}{%
    \Hy@raisedlink{\hyper@anchorstart{cite.\CurrentBib}\hyper@anchorend}%
    [\thebib]%
}
\makeatother
\makeatletter
    \renewcommand{\MR}[1]{\ (\href{https://mathscinet.ams.org/mathscinet-getitem?mr=MR#1}{MR#1})}
\makeatother

\usepackage{hyperref}
\hypersetup{
    colorlinks=true,
    linkcolor=-Cerulean,
    filecolor=magenta,
    urlcolor=Cerulean,
    citecolor=Cerulean
}

\usepackage[capitalize, nameinlink]{cleveref}
\crefname{equation}{}{}
\crefname{prop}{Proposition}{Propositions}
\crefname{figure}{Figure}{Figures}
\crefname{conjecture}{Conjecture}{Conjectures}
\crefname{question}{Question}{Questions}

\graphicspath{ {./Images/} }

\usepackage{soul}
\usepackage{lineno}

\definecolor{electricblue}{RGB}{125, 249, 255}
\definecolor{darkorange}{RGB}{213,94,0}
\definecolor{darkblue}{RGB}{0,114,178}
\definecolor{outlineorange}{RGB}{170,75,0}
\definecolor{outlineblue}{RGB}{0,91,142}


\theoremstyle{definition}
\newtheorem{theorem}{Theorem}[section]

\newtheorem{conjecture}[theorem]{Conjecture}
\newtheorem{definition}[theorem]{Definition}

\newtheorem{proposition}[theorem]{Proposition}
\newtheorem{corollary}[theorem]{Corollary}

\newtheorem{algorithm}[theorem]{Algorithm}

\newtheorem{remark}[theorem]{Remark}
\newtheorem{question}[theorem]{Question}

\newtheorem{letterconj}{Conjecture}
\newtheorem{letterques}[letterconj]{Question}

\DeclareMathOperator{\Spec}{\mathrm{Spec}}

\DeclareMathOperator{\PGL}{PGL}

\title[(Ir)reducibility of $p$-rank Strata]{Heuristics for (ir)reducibility of $p$-rank strata of the Moduli Space of Hyperelliptic Curves}

\subjclass{14Q05 (primary); 14H10, 14G15, 14G17 (secondary).}

\author[T.\ Bouchet]{Thomas Bouchet}
\address{Thomas Bouchet | Laboratoire J.A.\ Dieudonn\'e, Universit\'e C\^ote d'Azur, France}
\email{\href{mailto:thomas.bouchet@univ-cotedazur.fr}{\tt thomas.bouchet@univ-cotedazur.fr}}
\urladdr{\href{https://sites.google.com/view/thomas-bouchet/accueil}{\tt https://sites.google.com/view/thomas-bouchet/accueil}}

\author[E.\ Davis]{Erik Davis}
\address{Erik Davis | Department of Mathematics, Texas A\&M University, USA}
\email{\href{mailto:davis7e@tamu.edu}{\tt davis7e@tamu.edu}}
\urladdr{\href{https://davis7e.wordpress.com}{\tt https://davis7e.wordpress.com}}

\author[S.R.\ Groen]{Steven R.\ Groen}
\address{Steven R.\ Groen | Department of Mathematics, Lehigh University, USA}
\email{\href{mailto:stevengroen95@gmail.com}{\tt stevengroen95@gmail.com}}
\urladdr{\href{https://sites.google.com/view/stevengroen}{\tt https://sites.google.com/view/stevengroen}}

\author[Z.\ Porat]{Zachary Porat}
\address{Zachary Porat | Department of Mathematics and Computer Science, Wesleyan University, USA}
\email{\href{mailto:zporat@wesleyan.edu}{\tt zporat@wesleyan.edu}}
\urladdr{\href{https://zporat.github.io}{\tt https://zporat.github.io}}

\author[B.\ York]{Benjamin York}
\address{Benjamin York | Department of Mathematics, University of Connecticut, USA}
\email{\href{mailto:benjamin.york@uconn.edu}{\tt benjamin.york@uconn.edu}}
\urladdr{\href{https://benjamin-york.github.io/}{\tt https://benjamin-york.github.io}}

\begin{document}

\begin{abstract}
     Let $\mathcal{H}_g$ denote the moduli space of smooth hyperelliptic curves of genus $g$ in characteristic $p\geq 3$, and let $\mathcal{H}_g^f$ denote the $p$-rank $f$ stratum of $\mathcal{H}_g$ for $0 \leq f \leq g$.  Achter and Pries note in \cite{AP11} that determining the number of irreducible components of $\mathcal{H}_g^f$ would lead to several intriguing corollaries.  In this paper, we present a computational approach for estimating the number of irreducible components in various $p$-rank strata.  Our strategy involves sampling curves over finite fields and calculating their $p$-ranks. From the data gathered, we conjecture that the non-ordinary locus is geometrically irreducible for all genera $g> 1$.  The data also leads us to conjecture that the moduli space $\mathcal{H}^{g-2}_g$ is irreducible and suggests that $\mathcal{H}^f_g$ is irreducible for all $1 \leq f \leq g$.  We conclude with a brief discussion on $\mathcal{H}^0_g$.
\end{abstract}

\maketitle

\section{Introduction}

Throughout this paper, we consider hyperelliptic curves defined over a perfect field $k$ of characteristic $p\geq 3$. Given a smooth curve $C / k$ of genus $g_C$, we define the \textbf{$p$-rank} of $C$ to be the integer $f_C$ such that $\mathrm{Jac}(C)[p](\bar{k}) \cong (\mathbb{Z}/p\mathbb{Z})^{f_C}$. By \cite{Oda69}, the $p$-rank of $C$ equals the semisimple rank of the Frobenius operator on $\mathrm{H}^1(C,\mathcal{O}_C)$, i.e.\ the rank of the $g\textsuperscript{th}$ semilinear power.  In particular, we have $0 \leq f_C \leq g_C$.

Given a prime number $p \geq 3$, let $\mathcal{H}_g$ be the moduli space of smooth hyperelliptic curves over $\overline{\mathbb{F}}_p$ of genus $g$ up to geometric isomorphism.  In addition, the moduli space $\mathcal{H}_g$ can be defined over $\mathbb{F}_p$. For any $0 \leq f \leq g$, let $\mathcal{H}_g^f$ denote the \textbf{$p$-rank $f$ stratum} of $\mathcal{H}_g$, i.e.\ the moduli space of smooth hyperelliptic curves of genus $g$ and $p$-rank at most $f$. The curves whose $p$-rank is less than $g$ form the \textbf{non-ordinary locus} $\mathcal{H}_g^{g-1}$. In this paper, we are interested in the geometry of $\mathcal{H}_g^f$, in particular its number of irreducible components. 

Let $\overline{\mathcal{H}}_g^f$ denote the Deligne-Mumford compactification of $\mathcal{H}_g^f$. The full moduli space $\overline{\mathcal{H}}_g = \overline{\mathcal{H}}_g^g$ is known to be irreducible of dimension $2g-1$. Moreover, \cite{GP05}*{Proposition 2.1} shows that the compactified $p$-rank strata $\overline{\mathcal{H}}_g^f$ are pure of codimension $g-f$ and \cite{AP11}*{Lemma 3.2} explains that every component of $\overline{\mathcal{H}}_g^f$ intersects $\mathcal{H}_g^f$. In particular, for any $0 \leq f \leq g$ the space $\overline{\mathcal{H}}_g^f$ is nonempty of dimension $g+f-1$ and the generic point of any component corresponds to a smooth hyperelliptic curve of genus $g$ with $p$-rank $f$. Little is known about the number of geometrically irreducible components of $\overline{\mathcal{H}}_g^f$ for $f<g$ in general.

Let $\mathcal{A}_g^f$ be the moduli space of principally polarized abelian varieties of dimension $g$ and $p$-rank at most $f$, such that the Torelli morphism injects $\mathcal{H}_g^f$ into $\mathcal{A}_g^f$.  By \cite{Oor01}*{Corollary 1.5}, we know that the non-ordinary locus $\mathcal{A}_g^{g-1}$ is geometrically irreducible whenever $g>1$. Furthermore, by \cite{Cha05}*{Remark 4.7}, we know that $\mathcal{A}_g^f$ is geometrically irreducible for all $g \geq 3$.  However, much less is known about the $p$-rank strata of $\mathcal{H}_g$.

In the $g=1$ case, we have $\mathcal{H}_1=\mathcal{M}_{1,1} \cong \mathcal{A}_1$ via the Torelli morphism. The $p$-rank $0$ stratum $\mathcal{H}_1^0$ has dimension zero and the Deuring-Eichler mass formula (see \cite{Sil09}*{Theorem 4.1(c)}) gives the exact number of points of $\mathcal{H}_1^0$, which is roughly $\frac{p-1}{12}$. The $g=2$ case is also well understood.  \cite{Oor01}*{Corollary 1.5} provides that $\mathcal{A}_2^1$ is geometrically irreducible, and the geometric irreducibility of $\mathcal{H}_2^1$ follows from the fact that the generic point of $\mathcal{A}_2^1$ corresponds to the Jacobian of a smooth hyperelliptic curve. Finally, the number of geometric components of $\mathcal{H}_2^0$, given in \cite{IKO86}*{Remarks 2.16 and 2.17}, is shown to go to infinity as $p$ increases.

For $g \geq 3$ and $f<g$, the number of geometric components of $\mathcal{H}_g^f$ is less accessible, since it is no longer true that the generic point of $\mathcal{A}_g^f$ corresponds to the Jacobian of a hyperelliptic curve.  \cite{AP11}*{Section 3.7} observes that an understanding of the number of components of $\overline{\mathcal{H}}_g^f$ has interesting applications. More precisely, if $\overline{\mathcal{H}}_g^f$ has only one component, then in particular every component contains the moduli point of a chain of elliptic curves clutched together. As a corollary, the closure of any component of $\mathcal{H}_g^f$ would intersect the space $\Delta_i[\mathcal{H}_g]^f$, consisting of singular curves that are obtained by clutching at a ramified point two hyperelliptic curves whose respective genera are $i$ and $g-i$ and whose $p$-ranks add up to $f$. 

In this paper, we employ a computational approach to investigate this elusive question.  More precisely, we gather data by sampling curves over finite fields and computing their $p$-ranks. Using the Lang-Weil bounds described in \cite{LW54}, our process results in strong evidence for the number of components of $\mathcal{H}_g^f$. 

Let $c_{g,f,p,r}$ be the number of components of $\mathcal{H}_g^f \otimes \textrm{Spec}(\mathbb{F}_{p^r})$ and let $c_{g,f,p}$ be the number of components of $\mathcal{H}_g^f \otimes \text{Spec}(\overline{\mathbb{F}}_p)$. Recall that $\overline{\mathcal{H}}_g^f$ is pure of codimension $g-f$ and $\mathcal{H}_g^f$ is dense in $\overline{\mathcal{H}}_g^f$.
Thus, if we take a sample $S$ of $\overline{\mathbb{F}}_p$-isomorphism classes of hyperelliptic curves of genus $g$ over $\mathbb{F}_{p^r}$, then the Lang-Weil bounds prescribe the heuristic
\begin{equation} \label{eq:L-W intro}
\frac{ \# \{[C] \in S \; | \; f_C\leq f\}}{ \# S }  \approx \frac{c_{g,f,p,r}}{p^{r(g-f)}}.
\end{equation}
Here $[C]$ is the $\overline{\mathbb{F}}_p$-isomorphism class of a curve $C/\mathbb{F}_{p^r}$.  This allows us to estimate the number of components $c_{g,f,p,r}$, which in turn provides evidence for the number of geometric components $c_{g,f,p}$ when $r$ is varied.  See \cref{Sec:LangWeil} for a complete discussion of how we derive this heuristic.

In order to gather data on $c_{g,f,p,r}$, we use two different methods for generating isomorphism classes of hyperelliptic curves. One method is based on a family of pairwise non-isomorphic hyperelliptic curves over $\mathbb{F}_q$, which is relatively efficient compared to methods based on the invariants of the curves. The second method is based on an adaptation of \cite{How25}, in which hyperelliptic curves over finite fields are enumerated. To make this second method more computationally viable, we assume that all branch points are defined over $\mathbb{F}_{p^r}$. The $p$-rank of a curve $C$ is computed efficiently as the semisimple rank of a Hasse-Witt matrix of $C$, that is a matrix representing the semilinear action of Frobenius on $\mathrm{H}^1(C, \mathcal{O}_C)$ (see \cref{Sec:HasseWitt}).

Since the sample size is limited by computational capacity, the estimate from \cref{eq:L-W intro} is most precise for the non-ordinary locus $\mathcal{H}_g^{g-1}$. We use a large range of characteristics $p$, vary the exponent $1 \leq r \leq 4$, and run computations for genera $g$ with $3 \leq g \leq 20$, which leads to the following conjecture, extending the known fact that $\mathcal{H}_2^1$ is geometrically irreducible.

\renewcommand{\theletterconj}{\Alph{letterconj}}
\renewcommand{\theletterques}{\Alph{letterques}}
\begin{letterconj}[\cref{conj: non-ordinary}]
    For $g \geq 3$ and $p \geq 3$, the non-ordinary locus $\mathcal{H}_g^{g-1}$ is geometrically irreducible.
\end{letterconj}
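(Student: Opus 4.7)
The plan is to prove the conjecture by induction on $g$, using the clutching morphisms described in \cite{AP11}*{Section 3.7} to leverage irreducibility from smaller genera. The base case $g=2$ is classical, as noted in the introduction: $\mathcal{A}_2^1$ is geometrically irreducible by \cite{Oor01}, and $\mathcal{H}_2^1$ inherits this because the generic point of $\mathcal{A}_2^1$ corresponds to a hyperelliptic Jacobian. For the inductive step I assume the conjecture for all $2\leq g' < g$ and study the boundary behaviour of $\overline{\mathcal{H}}_g^{g-1}$, which by \cite{GP05}*{Proposition 2.1} is pure of dimension $2g-2$ and whose components all meet the open stratum $\mathcal{H}_g^{g-1}$ by \cite{AP11}*{Lemma 3.2}.

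The key idea is to produce a single irreducible boundary locus contained in every component. Consider the clutching morphism $\kappa : \mathcal{H}_{g-1}\times \mathcal{H}_1 \to \overline{\mathcal{H}}_g$ obtained by identifying a Weierstrass point on a hyperelliptic curve of genus $g-1$ with the origin of an elliptic curve. On $p$-rank strata, $\kappa$ carries $\mathcal{H}_{g-1}^{f_1}\times \mathcal{H}_1^{f_2}$ into $\overline{\mathcal{H}}_g^{f_1+f_2}$. Specialising to $(f_1,f_2)=(g-2,1)$, the image $Z:=\kappa(\mathcal{H}_{g-1}^{g-2}\times \mathcal{H}_1)\subset \overline{\mathcal{H}}_g^{g-1}$ is irreducible of dimension $2g-3$: by induction $\mathcal{H}_{g-1}^{g-2}$ is geometrically irreducible, and $\mathcal{H}_1$ is the irreducible coarse moduli of elliptic curves. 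The target dimension of $Z$ matches the expected dimension of $\partial \overline{\mathcal{H}}_g\cap \overline{\mathcal{H}}_g^{g-1}$.

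The crux is to show every component $W$ of $\overline{\mathcal{H}}_g^{g-1}$ contains $Z$, after which an analysis of the local structure of the $p$-rank stratification along the generic point of $Z$ (in the spirit of Norman--Oort deformation theory) would force all such $W$ to coincide. Proving that every component meets $Z$ is the main obstacle. By purity, each $W$ must intersect the boundary divisor of $\overline{\mathcal{H}}_g$, but pinning the intersection down to the clutching image of type $(g-1,1)$ with the prescribed $p$-rank splitting seems to require a delicate degeneration argument, analogous to techniques used by Achter--Pries and by Oort for the Ekedahl--Oort stratification on $\mathcal{A}_g$. An alternative, more direct line of attack, better suited to the hypersurface nature of the non-ordinary locus, would be to parametrise $\mathcal{H}_g$ by coefficients of $f(x)$ in a model $y^2=f(x)$ and prove the irreducibility of the single polynomial in these coefficients cut out by the non-ordinary condition on the Hasse--Witt matrix of \cref{Sec:HasseWitt}; however, controlling this polynomial uniformly in $g$ and $p$ appears to lie well beyond the current techniques and is precisely what the computational evidence in this paper substitutes for.
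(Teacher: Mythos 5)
This statement is a \emph{conjecture}: the paper does not prove it, but supports it with computational evidence, namely sampling hyperelliptic curves over $\mathbb{F}_{p^r}$, computing $p$-ranks via Hasse--Witt matrices, and reading off the number of irreducible components from point counts via the Lang--Weil heuristic of \cref{Sec:LangWeil}. So there is no proof in the paper against which your argument can be checked; what you have written is a proof \emph{strategy}, and by your own admission it is incomplete. The two places where it breaks are genuine gaps, each essentially as hard as the conjecture itself. First, you need every component $W$ of $\overline{\mathcal{H}}_g^{g-1}$ to contain (or at least meet) the specific clutching locus $Z=\kappa\bigl(\mathcal{H}_{g-1}^{g-2}\times\mathcal{H}_1\bigr)$. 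Purity and the boundary results of \cite{AP11} tell you that $W$ meets the boundary of $\overline{\mathcal{H}}_g$, but not that it meets the $\Delta_1$-type degeneration with the prescribed $p$-rank splitting $(g-2,1)$: a priori a component could degenerate only to splittings such as $(g-1,0)$, or only into $\Delta_0$ (irreducible nodal curves). Indeed, the paper cites \cite{AP11}*{Section 3.7} for the implication in the \emph{opposite} direction: irreducibility would \emph{imply} that every component meets each clutching locus. Using that as an ingredient is circular.

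Second, even if every component contained $Z$, you could not conclude that the components coincide: two distinct irreducible components of a variety can share a common subvariety of smaller dimension. To close the argument you would need $\overline{\mathcal{H}}_g^{g-1}$ to be unibranch (locally irreducible) at the generic point of $Z$, and no such local statement is available for the $p$-rank stratification of $\mathcal{H}_g$. It is worth noting that Oort's proof that $\mathcal{A}_g^{g-1}$ is irreducible (\cite{Oor01}) does \emph{not} proceed by boundary degeneration precisely because of this obstruction; it uses Hecke orbits and monodromy, and those tools do not transfer to $\mathcal{H}_g$ because for $g\geq 3$ the hyperelliptic locus is not Hecke-stable in $\mathcal{A}_g$ and its generic point is not a hyperelliptic Jacobian. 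Your alternative suggestion --- proving irreducibility of the hypersurface cut out by $\det$ of the semilinear power of the Hasse--Witt matrix in the coefficients of $f(x)$ --- is the ``right'' reformulation of the problem but, as you say, is exactly what is open; the paper's contribution is to replace that missing argument with statistically significant point-count evidence across $3\leq g\leq 20$, $r\leq 4$, and a wide range of $p$.
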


Our methodology allows us to determine heuristics for lower $p$-rank strata. Since curves with smaller $p$-rank are rarer, the evidence is generally less strong than for the non-ordinary locus. Nevertheless, the data allows us to make the following conjecture for the moduli space $\mathcal{H}_g^{g-2}$.

\begin{letterconj}[\cref{conj: f=g-2}]
For $g \geq 3$ and $p \geq 3$, the moduli space $\mathcal{H}_g^{g-2}$ is geometrically irreducible. 
\end{letterconj}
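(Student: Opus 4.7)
My proof plan is to assemble computational evidence for \cref{conj: f=g-2} using the Lang--Weil sampling framework of \cref{eq:L-W intro}, specialized to the codimension-$2$ stratum $\mathcal{H}_g^{g-2}$. Setting $f = g-2$, the heuristic predicts that the fraction of sampled hyperelliptic curves $C$ over $\mathbb{F}_{p^r}$ satisfying $f_C \leq g-2$ should be close to $c_{g,g-2,p,r}/p^{2r}$. Multiplying by $p^{2r}$ thus yields a direct numerical estimator for the number of $\mathbb{F}_{p^r}$-components, and the conjecture asserts that this estimator tends to $1$ uniformly in $p$ and $r$ (for $g \geq 3$).

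The first step is to generate large samples $S$ of $\overline{\mathbb{F}}_p$-isomorphism classes of hyperelliptic curves of genus $g$ using both enumeration strategies described in the introduction: the family of pairwise non-isomorphic curves and the Howe-style enumeration with branch points defined over $\mathbb{F}_{p^r}$. The second step is to compute, for each sampled curve $C$, a Hasse--Witt matrix and extract the $p$-rank $f_C$ as its semisimple rank, then tally those with $f_C \leq g-2$. The third step is to form the estimator
\[
\hat{c}_{g,g-2,p,r} \;=\; p^{2r}\cdot\frac{\#\{[C]\in S : f_C \leq g-2\}}{\#S}
\]
and examine its stability as $p$, $r$, and $g$ vary. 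Irreducibility is evidenced by $\hat{c}_{g,g-2,p,r}$ clustering at $1$; a systematic drift above $1$ would signal additional components.

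Two independent checks then need to be carried out. Since $\mathcal{H}_g^{g-2}$ only contains an open subset (rather than equals) the locus of curves with $p$-rank exactly $g-2$, I would quantify the contamination from $\mathcal{H}_g^{g-3}$: this sub-stratum has codimension $3$ and contributes at order $p^{-3r}$, which must lie below the statistical noise of the sample. Varying $r$ from $1$ up to roughly $4$ also serves to distinguish geometric irreducibility from a scenario where several $\mathbb{F}_p$-components are permuted by Frobenius and fuse into a single component over $\overline{\mathbb{F}}_p$.

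The main obstacle is sample size. The stratum $\mathcal{H}_g^{g-2}$ is exponentially rarer than the non-ordinary locus, occupying roughly a $p^{-2r}$ fraction rather than a $p^{-r}$ fraction of sampled curves. Obtaining a statistically meaningful signal therefore requires samples of size well in excess of $p^{2r}$, which becomes computationally forbidding as $p$, $r$, or $g$ grow; consequently the evidence will necessarily be strongest for small $p$ and $r$ and will weaken in the tails. This imbalance, together with the codimension-$3$ contamination issue above, is precisely why the result is phrased as a conjecture rather than a theorem.
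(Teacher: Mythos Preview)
Your proposal is correct and mirrors the paper's own approach: the paper likewise specializes the Lang--Weil heuristic to codimension $2$, computes the estimator $M_{g,g-2,p,r} = p^{2r}\cdot\#(S\cap\mathcal{H}_g^{g-2})/\#S$ via Hasse--Witt matrices on samples drawn from both the family and Galois-type methods, tabulates the statistics for $3\leq g\leq 20$ (primarily at $r=1$, with supplementary $r=2$ data), and explicitly attributes the widening spread at large $p$ to the rarity of $p$-rank $\leq g-2$ curves. The only substantive addition in the paper that you do not mention is a short argument, via a Kani--Rosen decomposition of $\mathrm{Jac}(C)$ for curves with an extra involution, showing that the locus $\mathcal{H}_g^{\mathrm{aut}}$ of curves with nontrivial reduced automorphism group cannot contain a component of $\mathcal{H}_g^{g-2}$, thereby justifying their exclusion in the Galois-type sampling.
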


Additionally, our data suggests that in general, this phenomenon remains the same for other $p$-rank strata.  That is, the moduli space $\mathcal{H}_g^f$ is geometrically irreducible for $1 \leq f \leq g$ and $p \geq 3$.  While the data is leading in this direction, we have insufficient evidence to make a concrete conjecture.  Instead, we pose an open question to the community. 

\begin{letterques}[\cref{q:1<=f<=g}]
For $1 \leq f \leq g$ and $p \geq 3$, is $\mathcal{H}_g^f$ always geometrically irreducible? \end{letterques}

In contrast, our data on the $p$-rank $0$ locus $\mathcal{H}_3^0$ suggests a different trend, namely that the number of components of $\mathcal{H}_3^0$ grows with $p$. This leads us to make a conjecture.

\begin{letterconj}[\cref{conj:g=3 f=0}]
The number of components of $\mathcal{H}_3^0$ goes to infinity as $p$ grows.
\end{letterconj}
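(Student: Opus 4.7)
The natural template for this conjecture is the known $g=2$ case from \cite{IKO86}. There the key input is that every $p$-rank zero principally polarized abelian surface is automatically supersingular (the only symmetric $p$-rank zero Newton polygon available for $g=2$ being the supersingular one), combined with the fact that the hyperelliptic locus fills $\mathcal{M}_2$. These identifications reduce the component count of $\mathcal{H}_2^0$ to that of the supersingular locus $\mathcal{A}_2^{\mathrm{ss}}$, which is controlled by a class number formula growing with $p$. The plan for $g=3$ is to import this strategy while accounting for the richer Newton polygon picture and the failure of hyperelliptic curves to fill $\mathcal{M}_3$.

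First, I would analyse which Newton strata of $\mathcal{A}_3^0$ meet the Torelli image of $\mathcal{H}_3^0$. In $\mathcal{A}_3$ the $p$-rank zero locus decomposes into the supersingular stratum (of dimension $\lfloor g^2/4 \rfloor = 2$) and the stratum with slopes $(1/3, 2/3)$ (of dimension $3$), while $\mathcal{H}_3^0$ has dimension $g+f-1 = 2$. The favourable scenario is that a generic smooth hyperelliptic curve of genus $3$ with $p$-rank zero is supersingular, so that Torelli carries $\mathcal{H}_3^0$ into $\mathcal{A}_3^{\mathrm{ss}}$; one would try to establish this by analysing the Cartier--Manin operator on $\HH^1(C,\mathcal{O}_C)$ directly from the defining equation $y^2 = f(x)$ and ruling out the $(1/3, 2/3)$ slope via the symmetry imposed by the hyperelliptic involution.

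Granted this containment, the next step would be to invoke the Li--Oort style decomposition of $\mathcal{A}_3^{\mathrm{ss}}$ into components indexed by superspecial lattices in a fixed quaternionic hermitian space, whose count grows polynomially in $p$ via a Deuring--Eichler-type mass formula. It remains to argue that enough of these components meet the hyperelliptic locus; since both $\mathcal{H}_3^0$ and each component of $\mathcal{A}_3^{\mathrm{ss}}$ are surfaces, a pigeonhole based on an upper bound for the number of components that can share a common surface's worth of hyperelliptic Jacobians would suffice to conclude.

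I expect the main obstacle to be the Newton polygon reduction in the first step: ruling out the $(1/3, 2/3)$ stratum for smooth hyperelliptic curves of genus three with $p$-rank zero is a subtle assertion for which the existing literature offers no uniform tool, and it is quite possible that both strata contribute to $\mathcal{H}_3^0$. Should this fail, a backup plan is to work through the boundary $\Delta_1[\mathcal{H}_3]^0$: the clutching map $\mathcal{H}_1^0 \times \overline{\mathcal{H}}_2^0 \to \partial \overline{\mathcal{H}}_3^0$ already has a source whose number of isomorphism classes grows with $p$ by \cite{IKO86}, and the task then reduces to producing a discrete invariant locally constant on components of $\overline{\mathcal{H}}_3^0$ (for instance a refined $a$-number or Ekedahl--Oort type) that separates the clutching data.
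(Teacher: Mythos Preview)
The statement you are addressing is a \emph{conjecture}, not a theorem. The paper offers no proof whatsoever: its entire support is the computational heuristic described in \cref{sec:f<g-1}, namely sampling curves over $\mathbb{F}_p$, computing their $p$-ranks, and observing that the Lang--Weil ratio $M_{3,0,p,1}$ appears to grow with $p$ (see \cref{fig:g=3f=0}). There is nothing in the paper against which to compare a proof.

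Your proposal is not a proof either; it is a research outline with gaps you yourself flag. Two of those gaps are more serious than you indicate. First, the hoped-for containment of $\mathcal{H}_3^0$ in the supersingular locus is almost certainly false: $\mathcal{H}_3^0$ has pure dimension $2$, while the supersingular locus of the hyperelliptic moduli in genus~$3$ is expected (and in many characteristics known) to have strictly smaller dimension, so the generic point of each component of $\mathcal{H}_3^0$ should carry Newton slopes $(1/3,2/3)$. The hyperelliptic involution imposes no constraint ruling this out. Second, in your backup plan the proposed separating invariants---$a$-number and Ekedahl--Oort type---are \emph{not} locally constant on irreducible components; they cut out deeper stratifications within each component, so they cannot distinguish components of $\overline{\mathcal{H}}_3^0$ in the way you need. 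Even granting the clutching construction, you would still have to exhibit an actual monodromy or deformation obstruction preventing the many boundary points from lying on a single component, and no such obstruction is supplied.

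In short: the paper treats this as an open problem supported only by numerics, and your sketch does not close it.
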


\noindent Furthermore, since the observed behavior for $\mathcal{H}_3^0$ extends the established behavior of $\mathcal{H}_1^0$ and $\mathcal{H}_2^0$, we pose the following question.

\begin{letterques}[\cref{q:f=0}]
For $g \geq 1$, does the number of geometric components of $\mathcal{H}_g^0$ go to infinity as $p$ grows?
\end{letterques}

Should \cref{conj: non-ordinary,conj: f=g-2,conj:g=3 f=0} hold and \cref{q:1<=f<=g,q:f=0} be answered in the affirmative, we would have a complete understanding of the number of components of $\mathcal{H}_g^f$ for any $g$ and $f$, generalizing the results in the literature for the specific cases when $g \leq 2$ or when $f=g$.

The overall structure of the paper is as follows.  We start in \cref{sec: background} with a theoretical background on hyperelliptic curves, Hasse-Witt matrices and Lang-Weil bounds. In \cref{Sec:CurveGen}, the two methods for generating pairwise non-isomorphic hyperelliptic curves over $\mathbb{F}_q$ are outlined. Then, we turn our attention to analyzing data for the non-ordinary locus $\mathcal{H}_g^{g-1}$ in \cref{sec:nonord}, presenting strong evidence that this locus is irreducible for genera $g$ with $3 \leq g \leq 20$ and $p \geq 3$. We conclude in \cref{sec:f<g-1} with a discussion of heuristics for other $p$-rank strata of $\mathcal{H}_g$.

Our work makes extensive use of the computer algebra system \textsc{Magma} \cite{magma}. The scripts written for and data collected throughout this project can be found in the GitHub repository \cite{Code}.

\section*{Acknowledgments}
This project originated at the 2024 Arizona Winter School on Abelian Varieties. The authors would like to thank the organizers of the Arizona Winter School for providing the conditions for this project to begin.  In particular, the authors are deeply grateful to Rachel Pries for suggesting this line of inquiry. We further thank Shiruo Wang for his contributions to the early stages of this project.

\section{Background} \label{sec: background}

\subsection{Hyperelliptic Curves over $\mathbb{F}_q$} \label{Sec:prank}

In this section, we will let $p$ denote an odd prime, $r \geq 1$ be an integer, and $q = p^r$.  For our purposes, a curve is a geometrically connected algebraic variety of dimension one.  In particular, we are interested in hyperelliptic curves, which we define as follows.

\begin{definition}
    Let $k$ be a field not of characteristic $2$ and $C/k$ a projective curve. We say that $C$ is \textbf{hyperelliptic} if it is a double cover of $\mathbb{P}^1$.
\end{definition}

Throughout this paper, we work exclusively with smooth and irreducible hyperelliptic curves. Let $C/\mathbb{F}_q$ be a hyperelliptic curve defined over $\mathbb{F}_q$. Let $g_C$ denote the genus of $C$. Then, $C$ has an affine model of the form $y^2 = f(x)$ where $f(x) \in \mathbb{F}_q[x]$ is a separable polynomial of degree $2g_C + 1$ or $2g_C + 2$. 

\begin{proposition}[\cite{LR12}*{Prop.\ 1.10}]\label{prop:iso_hyp_cur}
    Let $C_1$ and $C_2$ be two hyperelliptic curves defined over $k$, where $k$ is a field not of characteristic $2$.  Furthermore, let $y^2 = f_i(x)$ be an affine model of $C_i$. We assume that $C_1$ and $C_2$ are $k$-isomorphic. Then, there exist $M = \begin{pmatrix}a & b \\ c & d\end{pmatrix} \in \mathrm{GL}_2(k)$ and $e\in k^*$ such that the isomorphism is given by \[(x,y) \mapsto \left(\frac{ax+b}{cx+d},\frac{ey}{(cx+d)^{g_C+1}}\right).\] 
\end{proposition}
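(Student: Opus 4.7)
The plan is to use the canonical nature of the hyperelliptic structure (for $g \geq 2$) to show that any isomorphism $\phi: C_1 \to C_2$ descends to an automorphism of the base $\mathbb{P}^1$, and then to pin down the action on the $y$-coordinate from the defining equations. Recall that for $g \geq 2$, the hyperelliptic involution $\iota_i: (x,y) \mapsto (x,-y)$ on $C_i$ is the unique nontrivial automorphism whose quotient has genus zero, since $\iota_i$ corresponds to the involution generating the Galois group of the degree-$2$ map to $\mathbb{P}^1$ associated to the canonical linear system. Hence any $k$-isomorphism $\phi$ must commute with the involutions, so it induces an isomorphism $\phi_0 : C_1/\langle\iota_1\rangle \to C_2/\langle\iota_2\rangle$, i.e.\ an element of $\mathrm{PGL}_2(k)$. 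Lifting to $\mathrm{GL}_2(k)$ gives the matrix $M = \left(\begin{smallmatrix} a & b \\ c & d\end{smallmatrix}\right)$ with $\phi_0(x) = \frac{ax+b}{cx+d}$.

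Next I would analyze $\phi^*$ on function fields. Since $k(C_i) = k(x_i)[y_i]/(y_i^2-f_i(x_i))$ is a degree-$2$ extension of the fixed field $k(x_i)$ of $\iota_i$, and $\phi^*$ maps $k(x_2)$ into $k(x_1)$ by Step 1, we can write $\phi^*(y_2) = \alpha(x_1) + \beta(x_1)\,y_1$ with $\alpha, \beta \in k(x_1)$. Applying the compatibility $\phi^*\circ\iota_2^* = \iota_1^*\circ\phi^*$ yields $\alpha(x_1) - \beta(x_1)y_1 = -\alpha(x_1) - \beta(x_1)y_1$, forcing $\alpha = 0$. Squaring $\phi^*(y_2) = \beta(x_1)y_1$ then gives the identity
\[
\beta(x_1)^2\, f_1(x_1) \;=\; f_2\!\left(\tfrac{ax_1+b}{cx_1+d}\right).
\]

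To obtain the precise shape $\beta(x_1) = e/(cx_1+d)^{g+1}$, I would factor $f_2(u) = A_2\prod_j (u-\alpha_j)$ and compute
\[
f_2\!\left(\tfrac{ax_1+b}{cx_1+d}\right) \;=\; \frac{A_2\,\prod_j\bigl((a-\alpha_j c)x_1 + (b-\alpha_j d)\bigr)}{(cx_1+d)^{\deg f_2}}.
\]
Because $\phi$ sends ramification points of $C_1\to\mathbb{P}^1$ bijectively to those of $C_2\to\mathbb{P}^1$, the Möbius transformation $\phi_0$ matches the roots of $f_1$ (augmented by $\infty$ if $\deg f_1 = 2g+1$) with the roots of $f_2$ (augmented analogously). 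For each root $\lambda_i$ of $f_1$ mapped to $\alpha_{\sigma(i)}$, the linear factor $(a-\alpha_{\sigma(i)}c)x_1 + (b-\alpha_{\sigma(i)}d)$ is a scalar multiple of $(x_1-\lambda_i)$, while a root $\alpha_{j_0} = a/c$ (matched to $\infty$, when $\deg f_1 = 2g+1$ and $\deg f_2 = 2g+2$) contributes only a nonzero constant. Collecting these identities produces $\prod_j\bigl((a-\alpha_j c)x_1 + (b-\alpha_j d)\bigr) = K\, f_1(x_1)$ for some $K \in k^*$, whence $\beta(x_1)^2 = A_2 K /(cx_1+d)^{2g+2}$ and therefore $\beta(x_1) = e/(cx_1+d)^{g+1}$.

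The principal obstacle is the bookkeeping in this last step, namely handling the three possible combinations of parities of $\deg f_1$ and $\deg f_2$ in $\{2g+1,2g+2\}$. When both degrees are $2g+1$ one needs separately that $\phi_0(\infty)=\infty$, forcing $c=0$; when the parities differ, one must isolate the exceptional root $\alpha_{j_0} = a/c$ mapped to $\infty$ and verify that the corresponding linear factor degenerates to the nonzero constant $(bc-ad)/c$, so that degree counts on both sides still match. Once these cases are dispatched uniformly, the resulting constant $e \in k^*$ automatically lies in $k$ because $\beta$ was constructed inside $k(x_1)$, completing the proof.
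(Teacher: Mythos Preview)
The paper does not supply its own proof of this proposition; it is simply quoted from \cite{LR12}. Your argument is the standard one and is correct in outline: centrality of the hyperelliptic involution for $g\geq 2$ forces any $k$-isomorphism to descend to a M\"obius transformation on the $x$-line, and the relation $\beta(x_1)^2 f_1(x_1) = f_2\bigl(\tfrac{ax_1+b}{cx_1+d}\bigr)$ then pins down $\beta$ up to sign.

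One small correction in your case analysis: having both $\deg f_i = 2g+1$ does \emph{not} force $c=0$. The M\"obius map may send $\infty$ to a finite root $a/c$ of $f_2$ while sending the finite root $-d/c$ of $f_1$ to $\infty$. In that situation one linear factor in the numerator collapses to the nonzero constant $(bc-ad)/c$, while the ``missing'' factor $x+d/c = c^{-1}(cx_1+d)$ of $f_1$ combines with the denominator $(cx_1+d)^{2g+1}$ to produce $(cx_1+d)^{2g+2}$ up to a scalar, and you still obtain $\beta = e/(cx_1+d)^{g+1}$. This is exactly the same mechanism you already described for the mixed-parity case, so the bookkeeping is no worse than what you have written; only the claim that $c$ must vanish should be dropped.
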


Given a hyperelliptic curve $C/\mathbb{F}_q$ with genus $g_C$, we let $\mathrm{Jac}(C)$ denote the Jacobian of $C$, an abelian variety of dimension $g_C$. A major invariant of interest is the \textbf{$p$-rank} of $C$.  The $p$-rank of $C$, denoted $f_C$, is the integer $0 \leq f_C \leq g_C$ such that $\#\mathrm{Jac}(C)[p](\overline{\mathbb{F}}_{p}) = p^{f_C}$, where $\mathrm{Jac}(C)[p]$ is the $p$-torsion group scheme of $\mathrm{Jac}(C)$.  We say that a curve with $p$-rank $g_C$ is \textbf{ordinary}, and a curve with $p$-rank less than $g_C$ is \textbf{non-ordinary}.

Fix an integer $g \geq 2$ and a prime $p \geq 3$. We denote the moduli space of smooth hyperelliptic curves of genus $g$ over $\overline{\mathbb{F}}_p$ up to isomorphism by $\mathcal{H}_{g}$, which is known to be irreducible for any choice of $g$. For $0 \leq f \leq g$, we denote by $\mathcal{H}_{g}^{f}$ the moduli space of smooth hyperellipitic curves of genus $g$ and $p$-rank at most $f$ over $\overline{\mathbb{F}}_p$. (Note that $\mathcal{H}_{g}^{g} = \mathcal{H}_g$.) In the case of $f = g - 1$, we refer to $\mathcal{H}_{g}^{g-1}$ as the \textbf{non-ordinary locus of $\mathcal{H}_{g}$.}

Our goal is to collect data on the $p$-ranks of hyperelliptic curves over $\mathbb{F}_q$ up to geometric isomorphism.  In turn, this data allows us to draw conclusions primarily about the geometric structure of $\mathcal{H}_{g}^{g-1}$.  Our method also enables us to investigate the spaces $\mathcal{H}_{g}^{f}$ for $f < g-1$.  While \cite{GP05}*{Theorem 2.3} shows that the Deligne-Mumford compactification $\overline{\mathcal{H}}_g^f$ is pure of codimension $g-f$ and that $\mathcal{H}_g^f$ sits densely inside its compactification, we are particularly interested in determining the number of irreducible components of $\mathcal{H}_g^f$.

\subsection{The Hasse-Witt Matrix}\label{Sec:HasseWitt}
The first step towards our goal involves sampling hyperelliptic curves over $\mathbb{F}_q$, which we detail in \cref{Sec:CurveGen}.  Once we have a sample of hyperelliptic curves, we then need to calculate the $p$-rank for each curve.  To do this, we use a corresponding Hasse-Witt matrix.  

Suppose that $C/\mathbb{F}_q$ is a hyperelliptic curve of genus $g$ over $\mathbb{F}_q$, where $q = p^r$. A \textbf{Hasse-Witt matrix} of $C$ is a $g \times g$-matrix corresponding to the action of the Frobenius operator on a given basis of $\mathrm{H}^1(C, \mathcal{O}_{C})$. Let $A$ denote a Hasse-Witt matrix of $C$, and let $\sigma$ denote the action of the $p\textsuperscript{th}$ power map on $A$, i.e. $A^{\sigma} = (a_{i, j}^p)$. It follows from the main result of \cite{Oda69} that the $p$-rank of $C$ is equal to the rank of the $g\textsuperscript{th}$ semilinear power of $A$, which can be written as
\[ A^{(g)} = A A^{\sigma} \dotsm A^{\sigma^{g-1}}. \]
Note that when $C$ is defined over $\mathbb{F}_p$, we have $A^{(g)} = A^g$.  To compute a Hasse-Witt matrix in practice, we use the following proposition.

\begin{proposition}[\cite{Yui78}, clarified by \cite{AH19}]
    Let $C/\mathbb{F}_q$ be a hyperelliptic curve with affine model $y^2 = f(x)$, and define the constants $c_m \in \mathbb{F}_q$ by 
    \[f(x)^{\frac{p-1}{2}} = \sum_m c_m x^m.\] 
    Then, the $g \times g$-matrix $A = (a_{i,j})$ defined by $a_{i,j} = c_{jp - i}$ for all $0 \leq i, j \leq g$ is a Hasse-Witt matrix of $C$.
\end{proposition}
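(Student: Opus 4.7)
The plan is to compute the Frobenius action on $\mathrm{H}^1(C, \mathcal{O}_C)$ indirectly, by computing its Serre dual — the Cartier operator $\mathcal{C}$ on $\mathrm{H}^0(C, \Omega^1_C)$ — in a concretely chosen basis of differentials, and then transporting the matrix back to $\mathrm{H}^1(C, \mathcal{O}_C)$.

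First, I would fix the standard basis $\omega_i = x^{i-1}\,dx/y$ for $1 \leq i \leq g$ of $\mathrm{H}^0(C, \Omega^1_C)$ (this is adapted to the ramified model $\deg f = 2g+1$; the split case $\deg f = 2g+2$ uses the same basis but needs a brief separate check). The key algebraic identity is
\[
\omega_i \;=\; \frac{x^{i-1}\,dx}{y} \;=\; \frac{x^{i-1}\,y^{p-1}\,dx}{y^p} \;=\; \frac{x^{i-1}\, f(x)^{(p-1)/2}\,dx}{y^p}.
\]
Since $\mathcal{C}$ is $p^{-1}$-linear, it satisfies $\mathcal{C}(g^p\eta) = g\,\mathcal{C}(\eta)$, and on monomials one has $\mathcal{C}(x^k\,dx) = 0$ unless $k \equiv -1 \pmod p$, in which case $\mathcal{C}(x^{\ell p - 1}\,dx) = x^{\ell - 1}\,dx$.

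Next, expanding $x^{i-1} f(x)^{(p-1)/2} = \sum_m c_m\, x^{m+i-1}$ and picking out the exponents $m+i-1 = jp - 1$, i.e.\ $m = jp - i$, I would obtain
\[
\mathcal{C}(\omega_i) \;=\; \frac{1}{y}\sum_{j} c_{jp-i}\, x^{j-1}\,dx \;=\; \sum_{j=1}^{g} c_{jp - i}\, \omega_j,
\]
where the summation range is restricted to $1 \leq j \leq g$ because all other $j$ either give $m < 0$ (vanishing) or yield a differential that is not holomorphic on $C$ (and hence does not contribute to the Cartier image of a holomorphic form). This shows that the matrix of $\mathcal{C}$ in the basis $(\omega_i)$ has $(i,j)$-entry $c_{jp-i}$.

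Finally, I would invoke the Serre-duality pairing $\mathrm{H}^1(C,\mathcal{O}_C) \times \mathrm{H}^0(C,\Omega^1_C) \to k$ together with its compatibility with Frobenius on the left and the Cartier operator on the right — i.e.\ $\langle F\alpha, \omega\rangle = \langle \alpha, \mathcal{C}\omega\rangle^{1/p}$ up to the appropriate $p$-power twist — to conclude that the matrix of $F$ on $\mathrm{H}^1(C,\mathcal{O}_C)$, in the basis dual to $(\omega_i)$, is exactly $A = (c_{jp-i})$. The main obstacle I expect is bookkeeping around the Serre duality / Cartier--Frobenius identification: one must track conventions carefully so that the matrix one writes down represents $F$ (and not its transpose or its Verschiebung companion), and one must handle the split model $\deg f = 2g+2$ separately by verifying that the same basis of differentials and the same index truncation still apply. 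This is precisely the point where the clarification of \cite{AH19} over the original argument of \cite{Yui78} becomes essential, and once these conventions are pinned down the formula $a_{i,j} = c_{jp-i}$ drops out directly from the computation above.
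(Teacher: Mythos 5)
Your route---compute the Cartier operator on $\mathrm{H}^0(C,\Omega^1_C)$ in the basis $\omega_i = x^{i-1}\,dx/y$ and then dualize to obtain the Frobenius on $\mathrm{H}^1(C,\mathcal{O}_C)$---is exactly the paper's route; the paper simply outsources both steps to \cite{AH19}, quoting the Cartier--Manin formula $b_{i,j}=c_{ip-j}^{1/p}$ and the relation $A=(B^\sigma)^T$ rather than rederiving them. The one genuine slip in your write-up is in the displayed Cartier computation: because $\mathcal{C}$ is $p^{-1}$-linear, pulling the constants out of $\mathcal{C}\bigl(\sum_m c_m x^{m+i-1}\,dx\bigr)$ produces $c_{jp-i}^{1/p}$, not $c_{jp-i}$, so the coefficients of $\mathcal{C}(\omega_i)$ in the basis $(\omega_j)$ are $c_{jp-i}^{1/p}$---this missing $p$-th root is precisely the ``warning'' of \cite{AH19} and the reason the Cartier--Manin and Hasse--Witt matrices must not be conflated. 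Correspondingly, the Frobenius--Cartier adjunction under Serre duality reads $\langle F\alpha,\omega\rangle = \langle\alpha,\mathcal{C}\omega\rangle^{p}$ (Frobenius is $p$-linear, Cartier is $p^{-1}$-linear), not $\langle\,\cdot\,\rangle^{1/p}$ as you wrote; the $p$-th power in the adjunction is exactly what strips the $p$-th root off the Cartier entries and yields $a_{i,j}=c_{jp-i}$ on the nose, matching the paper's $A=(B^\sigma)^T$. With those two semilinearity twists corrected, your argument is the paper's proof written out in full rather than cited, and your remaining caveats (the transpose bookkeeping and the $\deg f = 2g+2$ model) are handled in \cite{AH19} just as you anticipate.
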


\begin{proof}
    Let $B$ be a $g \times g$-matrix corresponding to the action of the Verschiebung operator on a basis of $\mathrm{H}^0(C, \Omega_{C}^{1})$, commonly called a Cartier-Manin matrix of $C$.  If we choose the basis $\{x^{i-1}\frac{dx}{y} : 1 \leq i \leq g\}$ for $\mathrm{H}^0(C, \Omega_C^1)$, then we can compute a Cartier-Manin matrix $B = (b_{i,j})$ 
    by writing 
    \[f(x)^{\frac{p-1}{2}} = \sum_m c_m x^m\] 
    and taking $b_{i,j} = c_{ip-j}^{1/p}$, as initially described in \cite{Yui78} and later clarified in \cite{AH19}*{Section\ 3.1}.  Since $B$ is a Cartier-Manin matrix of $C$, $A = (B^{\sigma})^T = (c_{jp - i})_{1\leq i,j\leq g}$ is a Hasse-Witt matrix of $C$ (see \cite{AH19}*{Section\ 2.4}).
\end{proof}

\subsection{Lang-Weil Bounds}\label{Sec:LangWeil}

Having computed $p$-ranks for the curves in our sample, we next need to turn this data about $p$-ranks into information about the geometry of $\mathcal{H}_g^f$.  In order to obtain heuristics for the number of components of $\mathcal{H}_g^f$, we make use of the Lang-Weil bounds.

\begin{theorem}[\cite{LW54}*{Cor.\ 2}] \label{thm: Lang-Weil}
Let $\mathbb{F}_q$ be a finite field and let $V \subseteq \mathbb{P}^n$ be an irreducible variety of dimension $d$ defined over $\mathbb{F}_q$. Denote by $V(\mathbb{F}_q)$ the set of $\mathbb{F}_q$-rational points of $V$. Then, we have
    \begin{equation*} \label{eq: L-W irred}
    \# V(\mathbb{F}_q) = q^{d} + O\left(q^{d-\frac{1}{2}}\right).
    \end{equation*}
\end{theorem}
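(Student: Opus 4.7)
The plan is to prove the Lang-Weil estimate by induction on the dimension $d$ of $V$, with the base case handled by the Hasse-Weil bound for curves and the inductive step carried out via a Bertini-type slicing argument. I will assume that $V$ is in fact geometrically irreducible, which is the setting where the stated estimate is sharp; if $V$ is merely $\mathbb{F}_q$-irreducible but geometrically reducible, then the geometric components are permuted transitively by Frobenius and a strictly better bound of order $q^{d-1/2}$ holds by reducing to each component.

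For the base case $d = 1$, let $\widetilde{V} \to V$ be the normalization, so that $\widetilde{V}$ is a smooth projective curve over $\mathbb{F}_q$ of some genus $g$. The Hasse-Weil inequality (the Riemann hypothesis for curves) gives $|\#\widetilde{V}(\mathbb{F}_q) - (q+1)| \leq 2g\sqrt{q}$, and since $V$ and $\widetilde{V}$ differ only at the finitely many singular points of $V$, I obtain $\#V(\mathbb{F}_q) = q + O(q^{1/2})$, with the implied constant depending only on the degree of $V$ in $\mathbb{P}^n$ (which uniformly bounds both $g$ and the number of singularities of $V$).

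For the inductive step, fix a projective embedding of $V$ of some degree $\delta$, and consider the family, parametrized by a Grassmannian $G$, of codimension $d-1$ linear subspaces $L \subset \mathbb{P}^n$. By a Bertini irreducibility theorem (valid in arbitrary characteristic for sufficiently ample linear systems), for all $L$ outside a proper closed subvariety $B \subsetneq G$ the slice $V \cap L$ is a geometrically irreducible projective curve of degree at most $\delta$. Next, double-count the incidences $\{(P, L) : P \in V \cap L\}$ with both $P$ and $L$ defined over $\mathbb{F}_q$. Summing over $P$ first gives $\#V(\mathbb{F}_q) \cdot N$, where $N$ is the explicit polynomial in $q$ counting codimension $d-1$ $\mathbb{F}_q$-subspaces through a fixed rational point; summing over $L$ first gives $\sum_{L \in G(\mathbb{F}_q)} \#(V \cap L)(\mathbb{F}_q)$. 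Applying the base case to the good slices $L \notin B$ and a trivial $O(q^d)$ bound to the bad slices, then dividing by $N$, yields the required estimate $\#V(\mathbb{F}_q) = q^d + O(q^{d-1/2})$.

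The main obstacle will be controlling the contribution of the bad locus $B$. Since $B \subsetneq G$ is a proper closed subvariety, one has $\#B(\mathbb{F}_q) = O(q^{\dim G - 1})$, and the ratio $q^d \cdot \#B(\mathbb{F}_q) / (N \cdot \#G(\mathbb{F}_q))$ translates precisely into the loss of $q^{-1/2}$ relative to the main term. The real technical input is verifying that $B$ is indeed proper and that its defining equations (and thus its degree, which in turn controls its $\mathbb{F}_q$-point count) depend only on $\delta$, $n$, and $d$, not on $V$ itself. This is delicate in positive characteristic because Bertini smoothness can fail, so I would rely on the Bertini irreducibility statement for linear pencils rather than the smoothness version, and keep careful track of how the constants in the error term propagate through the induction.
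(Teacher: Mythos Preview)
The paper does not prove this theorem at all: it is quoted verbatim as \cite{LW54}*{Cor.\ 2} and used as a black box, so there is no proof in the paper to compare your proposal against.

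That said, your sketch is a faithful outline of the classical Lang--Weil argument: reduce to curves via linear slicing and invoke the Hasse--Weil bound as the base case. A few technical remarks. First, the original paper of Lang and Weil does not use a Bertini irreducibility theorem in the modern sense; instead it projects $V$ birationally onto a hypersurface in $\mathbb{P}^{d+1}$ and then slices that hypersurface by a pencil of hyperplanes, controlling the reducible fibres by an elementary degree count rather than by appealing to an abstract Bertini statement. Your version, slicing directly by codimension-$(d-1)$ linears and invoking Bertini irreducibility, is a legitimate variant but you would need to be precise about which Bertini statement you use (Jouanolou's version works over any infinite field, and one then specializes to $\mathbb{F}_q$ by a spreading-out argument). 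Second, your claim that the bad locus $B$ has degree bounded only in terms of $\delta$, $n$, $d$ is the crux of making the $O$-constant uniform, and this is exactly where Lang and Weil do the real work; your sketch correctly identifies it as the main obstacle but does not resolve it. Third, a minor arithmetic slip: the trivial bound on a bad slice $V\cap L$ is $O(q)$ (it is a curve of bounded degree), not $O(q^d)$; with the correct bound the contribution of $B$ is indeed $O(q^{d-1})$ after dividing by $N$, which is absorbed into the error term.
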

As a consequence, if $V/\mathbb{F}_q$ has $c_{V,\mathbb{F}_q}$ top-dimensional irreducible components, i.e.\ geometrically irreducible components that can be defined over $\mathbb{F}_q$, then we have
\begin{equation*} \label{eq: L-W red}
\# V(\mathbb{F}_q) = c_{V,\mathbb{F}_q} q^{d} + O\left(q^{d-\frac{1}{2}}\right).
\end{equation*}

For the purposes of this paper, one may take $V=\overline{\mathcal{H}}_g^f$, which is defined over $\mathbb{F}_p$. Let $c_{g,f,p,r}$ be the number of irreducible components of $\overline{\mathcal{H}}_g^f \otimes \Spec(\mathbb{F}_{p^r})$ and let $c_{g,f,p}$ be the number of irreducible components of $\overline{\mathcal{H}}_g^f \otimes \Spec(\overline{\mathbb{F}}_p)$. Then, since $\overline{\mathcal{H}}_g^f$ is pure of dimension $g+f-1$, we have
\begin{align*} \label{eq:L-W Hgf}
    \# \overline{\mathcal{H}}_g^f(\mathbb{F}_{p^r}) = c_{g,f,p,r} p^{r(g+f-1)} + O\left(p^{r\left(g+f-1-\frac{1}{2} \right)}\right).
\end{align*}

Note that $\mathcal{H}_g^f$ is dense inside every component of $\overline{\mathcal{H}}_g^f$, so we find the same number of components if we reduce our search to smooth hyperelliptic curves. Recall also that the full space $\overline{\mathcal{H}}_g$ is geometrically irreducible. Given a sufficiently large sample $S$ from $\mathcal{H}_g(\mathbb{F}_{p^r})$, it follows that
\begin{equation*} \label{Lang-Weil last}
\frac{\#\left(S \cap \mathcal{H}_g^f \right)} {\# S} \approx \frac{\overline{\mathcal{H}}_g^f(\mathbb{F}_{p^r})}{\overline{\mathcal{H}}_g(\mathbb{F}_{p^r})} =\frac{c_{g,f,p,r} p^{r(g+f-1)} + O\left(p^{r\left( g+f-1-\frac{1}{2} \right)}\right)} {p^{r(2g-1)} + O\left(p^{r\left( 2g-1-\frac{1}{2} \right)}\right)} \approx \frac{c_{g,f,p,r}}{p^{r(g-f)}}. 
\end{equation*}
This heuristic immediately results in an approximation of $c_{g,f,p,r}$:
\begin{equation} \label{eq: cgfpr approx}
c_{g,f,p,r} \approx \frac{p^{r(g-f)} \# (S \cap \mathcal{H}_g^f)} {\#S}.
\end{equation}
We note that the approximation will become more accurate as $S$ and $\mathcal{H}_g^f(\mathbb{F}_{p^r})$ grow.  Finally, we have $$\max_{r \leq R} c_{g,f,p,r}=c_{g,f,p}$$ for $R$ sufficiently large, since every geometric component can be defined over a finite field. Thus, this sampling method gives rather strong evidence for the number $c_{g,f,p}$, which is of primary interest.

\section{Computational Methods}\label{Sec:CurveGen}
\subsection{Family Method} \label{Sec:Family}

Our first method for generating samples of hyperelliptic curves involves working with a family of curves whose models we have fine control over. The curves in the sample may be chosen up to $\mathbb{F}_q$-isomorphism class, but unfortunately, are not guaranteed to be geometrically non-isomorphic.  We note that this downside is quite negligible, though, as the vast majority of curves have no twists apart from the quadratic twist.  In fact, by \cite{GV08}*{Proposition 2.1}, the curves with additional twists form a locus of codimension $g-1$ in $\mathcal{H}_g$.  Furthermore, by construction, our family will exclude quadratic twists.

Let $g \geq 2$, $p$ an odd prime, $r \geq 1$, and $q = p^r$. We assume that $p$ does not divide $2g+1$. When the characteristic does divide $2g+1$, we rely on the secondary sampling method outlined in \cref{Sec:GaloisType}.  Over $\overline{\mathbb{F}}_p$, every hyperelliptic curve possesses a rational branch point that can be shifted to infinity via a rational map. Therefore, we focus on hyperelliptic curves over $\mathbb{F}_q$ that have a branch point in $\mathbb{F}_q$. 

\begin{proposition}[\cite{Loc94}*{Prop.\ 1.2}]
    Let $C/\mathbb{F}_q$ be a hyperelliptic curve of genus $g$ with a rational branch point.
    Then, $C$ admits a model of the form $y^2 = f(x)$, where $f \in \mathbb{F}_q[x]$ is a monic polynomial of degree $2g+1$.
    Furthermore, if $p$ does not divide $2g+1$, we can assume that \[f(x) = x^{2g+1}+\tilde{f}(x),\]
    where $\tilde{f}\in\mathbb{F}_q[x]$ is of degree at most $2g-1$.
\end{proposition}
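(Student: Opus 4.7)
The plan is to perform three successive normalizations of the affine model, each realized by an element of the isomorphism group described in \cref{prop:iso_hyp_cur}: a M\"obius transformation to move the rational branch point to infinity, a diagonal scaling to make the leading coefficient $1$, and finally an affine translation in $x$ to kill the $x^{2g}$ coefficient. All transformations will be chosen with $\mathbb{F}_q$-coefficients so that the resulting model stays defined over $\mathbb{F}_q$.

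First I would start from any affine model $y^2 = h_0(x)$ with $h_0 \in \mathbb{F}_q[x]$ separable of degree $2g+1$ or $2g+2$. The $2g+2$ branch points of the hyperelliptic map $C \to \mathbb{P}^1$ correspond to the roots of $h_0$ together with $\infty$ when $\deg h_0 = 2g+1$. By assumption one branch point $P$ is $\mathbb{F}_q$-rational. Pick any element of $\mathrm{PGL}_2(\mathbb{F}_q)$ sending $P$ to $\infty$; applying the associated isomorphism from \cref{prop:iso_hyp_cur} puts the curve in the form $y^2 = h(x)$ where $h \in \mathbb{F}_q[x]$ is separable of degree exactly $2g+1$ (since $\infty$ is now a branch point).

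Next I would make $h$ monic. Writing $h(x) = c\,x^{2g+1} + \cdots$ with $c \in \mathbb{F}_q^*$, the change of variables $(x, y) \mapsto (c\,x,\ c^{g+1}y)$, which corresponds to $M = \mathrm{diag}(c,1)$ and $e = c^{g+1}$ in \cref{prop:iso_hyp_cur}, transforms $h$ into $c^{-2(g+1)} h(c\,x) = x^{2g+1} + \cdots$; note that no square-root extraction is needed because the leading coefficient of the rescaled polynomial comes out to $c^{2g+1} / c^{2g+2} \cdot c = 1$ automatically. This yields a monic model $y^2 = h(x)$ of degree $2g+1$ over $\mathbb{F}_q$.

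Finally, to achieve the specific shape $x^{2g+1} + \tilde f(x)$ with $\deg \tilde f \leq 2g-1$, I would translate the variable $x$. Write $h(x) = x^{2g+1} + a_{2g}\,x^{2g} + \cdots$. Under $x \mapsto x - t$ with $t \in \mathbb{F}_q$, the coefficient of $x^{2g}$ becomes $a_{2g} - (2g+1)\,t$. Since $p \nmid 2g+1$ by hypothesis, $2g+1$ is invertible in $\mathbb{F}_q$, so the choice $t = a_{2g}/(2g+1) \in \mathbb{F}_q$ eliminates the subleading term while preserving monicity and the degree. There is no serious obstacle in any of these steps; the only place where the hypotheses are actually used are the rational branch point (for the M\"obius step) and the coprimality $p \nmid 2g+1$ (for the translation step), precisely as in the statement.
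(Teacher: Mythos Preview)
Your proof is correct and follows essentially the same three-step normalization as the paper's proof: move the rational branch point to $\infty$, renormalize to make the polynomial monic, and then translate $x$ by $a_{2g}/(2g+1)$ to kill the subleading term. You supply more detail than the paper (in particular, you make explicit the scaling $(x,y)\mapsto (cx,c^{g+1}y)$ that achieves monicity over $\mathbb{F}_q$ without extracting a square root), but the strategy is identical.
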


\begin{proof}
    Since $C$ has a rational branch point, we can move it to infinity.  Thus, there exists $f \in \mathbb{F}_q[x]$ of degree $2g+1$ such that $C$ is given by the vanishing of $y^2 = f(x)$.  After renormalization, we may assume that the coefficient of $x^{2g+1}$ in $f$ is $1$. Since $p$ does not divide $2g+1$, a well-chosen linear change of variables can remove the $x^{2g}$ term.
\end{proof}

We call a \textbf{pointed curve} $(C, P)$ a pair consisting of a curve $C$ and a marked point $P$ on $C$.
We now consider the set of pointed hyperelliptic curves with their marked point at infinity. 
Any isomorphism between two such curves must preserve the point at infinity, 
and isomorphisms of this form can be explicitly described as in the following proposition.

\begin{proposition}[\cite{Loc94}*{Prop.\ 1.2}]\label{prop:iso_pointed}
    Let $(C_1,\infty)/\mathbb{F}_q$ and $(C_2,\infty)/\mathbb{F}_q$ be two pointed hyperelliptic curves defined by \[y^2 = x^{2g+1}+f_i(x)\,\] where $\deg(f_i) \leq 2g-1$. 
    Then, $(C_1,\infty)$ and $(C_2,\infty)$ are isomorphic if and only if there exists $u\in\overline{\mathbb{F}}_p$ such that \[f_2(x) = u^{2g+1}f_1\left(\frac{x}{u}\right).\]
\end{proposition}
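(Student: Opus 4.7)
The plan is to combine \cref{prop:iso_hyp_cur} with the requirement that the isomorphism preserves the marked point at infinity. By \cref{prop:iso_hyp_cur}, every isomorphism from $C_1$ to $C_2$ over $\overline{\mathbb{F}}_p$ has the form
\begin{equation*}
(x,y) \mapsto \left( \frac{ax+b}{cx+d},\ \frac{ey}{(cx+d)^{g+1}} \right),
\end{equation*}
for some $M = \begin{pmatrix} a & b \\ c & d \end{pmatrix} \in \mathrm{GL}_2(\overline{\mathbb{F}}_p)$ and $e \in \overline{\mathbb{F}}_p^*$. The condition that $\infty \mapsto \infty$ forces $c=0$, so after absorbing $d$ the isomorphism reduces to the affine form $(x,y) \mapsto (\alpha x + \beta,\, \gamma y)$ with $\alpha,\gamma \in \overline{\mathbb{F}}_p^*$ and $\beta \in \overline{\mathbb{F}}_p$.

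Next I would substitute this change of variables into the defining equation $Y^2 = X^{2g+1} + f_2(X)$ of $C_2$ and use $y^2 = x^{2g+1} + f_1(x)$ to obtain the polynomial identity
\begin{equation*}
\gamma^2 \bigl( x^{2g+1} + f_1(x) \bigr) = (\alpha x + \beta)^{2g+1} + f_2(\alpha x + \beta).
\end{equation*}
Matching coefficients gives the proposition: the $x^{2g+1}$ coefficient yields $\gamma^2 = \alpha^{2g+1}$, while the $x^{2g}$ coefficient on the left is zero (since $\deg f_1 \leq 2g-1$) and on the right equals $(2g+1)\alpha^{2g}\beta$ (since $\deg f_2 \leq 2g-1$ also contributes nothing at this degree). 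Under the running assumption $p \nmid 2g+1$, this forces $\beta = 0$. The remaining identity $u^{2g+1} f_1(x) = f_2(ux)$ with $u := \alpha$ rewrites, after the substitution $X = ux$, as $f_2(X) = u^{2g+1} f_1(X/u)$.

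For the converse, given $u \in \overline{\mathbb{F}}_p^*$ satisfying $f_2(x) = u^{2g+1} f_1(x/u)$, I would pick any $\gamma \in \overline{\mathbb{F}}_p$ with $\gamma^2 = u^{2g+1}$ (such a $\gamma$ exists since $\overline{\mathbb{F}}_p$ is algebraically closed) and verify directly that $(x,y) \mapsto (ux,\gamma y)$ defines a pointed isomorphism $(C_1,\infty) \to (C_2,\infty)$. The only real subtlety in the argument is the coefficient comparison in degree $2g$, which is precisely where the hypothesis $p \nmid 2g+1$ enters; everything else is routine substitution and bookkeeping.
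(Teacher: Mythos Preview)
The paper does not supply its own proof of this proposition; it is quoted directly from \cite{Loc94}*{Prop.~1.2} without argument. Your proof is correct and is the standard one: restrict the general isomorphism form of \cref{prop:iso_hyp_cur} to maps fixing $\infty$, reduce to an affine substitution $(x,y)\mapsto(\alpha x+\beta,\gamma y)$, and compare coefficients of the resulting polynomial identity. Your identification of the degree-$2g$ coefficient as the place where the hypothesis $p\nmid 2g+1$ is needed (to force $\beta=0$) is exactly right and is consistent with the standing assumption made at the start of \cref{Sec:Family}.
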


Let us consider the subset of pointed hyperelliptic curves with marked points at infinity, where each curve has an affine model of the form \[y^2 = x^{2g+1}+\alpha_1 x^{2g - 1} + \alpha_2 x^{2g - 2} + \dotsb,\]
such that $\alpha_1 = \alpha_2$ and both coefficients are non-zero.  We denote this set $\mathcal{F}_{g,q}$.

\begin{corollary}\label{cor:unicity_family}
    Let $(C_1,\infty)$ and $(C_2,\infty)$ be elements of $\mathcal{F}_{g,q}$ defined by $f_1$ and $f_2$ with $f_1\neq f_2$. Then, these pointed hyperelliptic curves are not geometrically isomorphic. 
\end{corollary}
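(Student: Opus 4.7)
The proposal is to apply \cref{prop:iso_pointed} directly, and show that the two defining conditions $\alpha_1 = \alpha_2$ and $\alpha_1, \alpha_2 \neq 0$ built into the family $\mathcal{F}_{g,q}$ rigidify the curves by pinning down the only remaining scaling parameter.

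Suppose for contradiction that $(C_1, \infty)$ and $(C_2, \infty)$ are geometrically isomorphic. Write
\[
f_1(x) = x^{2g+1} + \alpha_1 x^{2g-1} + \alpha_2 x^{2g-2} + (\text{lower order}),
\]
\[
f_2(x) = x^{2g+1} + \beta_1 x^{2g-1} + \beta_2 x^{2g-2} + (\text{lower order}),
\]
with $\alpha_1 = \alpha_2 \neq 0$ and $\beta_1 = \beta_2 \neq 0$ by definition of $\mathcal{F}_{g,q}$. By \cref{prop:iso_pointed}, there exists $u \in \overline{\mathbb{F}}_p^{\,*}$ with $f_2(x) = u^{2g+1} f_1(x/u)$. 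Expanding the right-hand side, the coefficient of $x^{2g-i}$ on the right is multiplied by $u^i$ relative to $f_1$, so matching coefficients gives
\[
\beta_1 = u^2 \alpha_1, \qquad \beta_2 = u^3 \alpha_2.
\]

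The key step is to use the constraints in the family. Dividing these two relations (all four quantities are nonzero) yields $\beta_1/\beta_2 = (\alpha_1/\alpha_2)/u$. Since $\alpha_1 = \alpha_2$ and $\beta_1 = \beta_2$, this forces $u = 1$, and therefore $f_2(x) = f_1(x)$, contradicting $f_1 \neq f_2$. This will complete the argument.

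There is no real obstacle here; the work has already been done in choosing the correct normalization for $\mathcal{F}_{g,q}$. The point of the family is precisely that the residual scaling symmetry $x \mapsto x/u$ (which is all that remains of $\mathrm{GL}_2$ after fixing the branch point at $\infty$ and removing the $x^{2g}$ coefficient) acts on the $x^{2g-1}$ and $x^{2g-2}$ coefficients with different characters ($u^2$ versus $u^3$), so imposing equality of those two coefficients (and requiring them nonzero to actually exploit the equality) uniquely determines $u$.
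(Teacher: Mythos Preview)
Your argument is correct and is exactly the intended one: the paper states \cref{cor:unicity_family} as an immediate corollary of \cref{prop:iso_pointed} without writing out a proof, and the computation you give (that the scaling $u$ acts by $u^2$ and $u^3$ on the $x^{2g-1}$ and $x^{2g-2}$ coefficients, forcing $u=1$ under the normalization $\alpha_1=\alpha_2\neq 0$) is precisely what that corollary is encoding.
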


This construction yields a quasi-affine subscheme of dimension $2g-1$ of the moduli space of pointed hyperelliptic curves $\mathcal{H}_{g,1}$ (which has dimension $2g$).  We now wish to separate the curves in $\mathcal{F}_{g,q}$ by their $\mathbb{F}_q$-isomorphism classes. 

\begin{proposition} \label{prop:algo}
    Let $(C, \infty) \in \mathcal{F}_{g,q}$. Then, there are at most $2g + 1$ curves $(C', \infty) \in \mathcal{F}_{g,q}$ such that $C$ is $\mathbb{F}_q$-isomorphic to $C'$. Moreover, the collection of all such $\mathbb{F}_q$-isomorphic curves can be explicitly computed.
\end{proposition}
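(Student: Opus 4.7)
The plan is to classify every $\mathbb{F}_q$-isomorphism $\phi \colon C \to C'$ between elements of $\mathcal{F}_{g,q}$ by tracking where it sends the marked branch point $\infty_C$. By \cref{prop:iso_hyp_cur}, $\phi$ has the explicit M\"obius-plus-twist form given there, and since it bijects the branch points of $C$ to those of $C'$, the preimage $\phi^{-1}(\infty_{C'})$ must be an $\mathbb{F}_q$-rational branch point of $C$: either $\infty_C$ itself (when the $c$-entry of the M\"obius matrix vanishes) or one of the at most $2g+1$ rational roots of $f$.

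In the first case, $\phi$ is affine of the form $(x,y) \mapsto (\alpha x + \beta, \gamma y)$ with $\gamma^2 = \alpha^{2g+1}$. Pushing $y^2 = f(x)$ through $\phi$ and demanding that the image lie in $\mathcal{F}_{g,q}$ forces $\beta = 0$ (from vanishing of the $x^{2g}$ coefficient, using $p \nmid 2g+1$) and then $\alpha = 1$ (from equating the $x^{2g-1}$ and $x^{2g-2}$ coefficients under $\alpha_1 = \alpha_2 \neq 0$), by essentially the same computation underlying \cref{cor:unicity_family}. Hence this case contributes only $C' = C$. In the second case, I claim that for each $\mathbb{F}_q$-rational root $\rho$ of $f$ there is at most one $C' \in \mathcal{F}_{g,q}$ admitting an $\mathbb{F}_q$-isomorphism $\phi \colon C \to C'$ with $\phi^{-1}(\infty_{C'}) = \rho$: if two isomorphisms $\phi_i \colon C \to C_i'$ both sent $\rho$ to infinity, then $\phi_2 \circ \phi_1^{-1} \colon C_1' \to C_2'$ would be an $\mathbb{F}_q$-isomorphism fixing $\infty$, so the first case applied to $(C_1', \infty)$ and $(C_2', \infty)$ would force $C_1' = C_2'$.

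Combining the two cases yields the asserted upper bound on the number of such $C'$. For the explicit-computation part, the idea is to loop over each $\mathbb{F}_q$-rational root $\rho$ of $f$, take the M\"obius matrix $M = \begin{pmatrix} a & b \\ 1 & -\rho \end{pmatrix}$ sending $\rho$ to infinity, and solve for $a, b, e$ from the three conditions defining $\mathcal{F}_{g,q}$: the monic condition fixes $e^2 = (b + \rho a)^{2g+1}/f'(\rho)$; vanishing of the $x^{2g}$ coefficient produces a linear relation between $a$ and $b$; and the equality $\alpha_1' = \alpha_2'$ gives a further polynomial relation, cutting the parameter space down to finitely many explicit solutions. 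The main technical obstacle is the bookkeeping of these three normalization conditions over $\mathbb{F}_q$---in particular verifying that $(b + \rho a)^{2g+1}/f'(\rho)$ is a square so that $e \in \mathbb{F}_q^*$ exists, and that the resulting $\alpha_1'$ is nonzero so the image genuinely lies in $\mathcal{F}_{g,q}$---but modulo these checks the procedure explicitly enumerates the $\mathbb{F}_q$-isomorphism class of $C$ inside $\mathcal{F}_{g,q}$.
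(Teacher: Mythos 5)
Your argument is correct and follows essentially the same route as the paper: classify $\mathbb{F}_q$-isomorphisms by which branch point is sent to infinity, dispose of the pointed case via \cref{cor:unicity_family}, obtain at most one new member of $\mathcal{F}_{g,q}$ per rational root of the defining polynomial by composing two such isomorphisms into a pointed one, and compute the candidates explicitly by a M\"obius transformation followed by renormalization. The one obstacle you flag---whether the twist parameter $e$ can be taken in $\mathbb{F}_q^*$, i.e.\ whether a square condition on the leading coefficient must be checked---is actually vacuous: because the model has odd degree $2g+1$ and $\gcd(2g+1,2)=1$, every nonzero leading coefficient can be written as $u^{2g+1}e^{-2}$ with $u,e\in\mathbb{F}_q^*$, so the renormalization to a monic model is always realized by an $\mathbb{F}_q$-isomorphism rather than a quadratic twist, which is why the paper's proof (and \cref{alg:Fq_isomclass}) never needs such a check.
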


\begin{proof}
    Let $(C,\infty)\in \mathcal{F}_{g,q}$ be given by $y^2 = h(x) = x^{2g + 1} + f(x)$.
    To find a pointed curve $(C',\infty) \in \mathcal{F}_{g,q}$ which is not isomorphic to $(C,\infty)$, but whose curve $C'$ is $\mathbb{F}_q$-isomorphic to $C$, we are led to consider the roots of the polynomial $h$. Indeed, any $\mathbb{F}_q$-isomorphism $\psi$ between $C$ and $C'$ must send the roots of $h$ to the roots of $h'$, where $C'$ is given by $y^2=h'(x)$.

    We note that the root at infinity of $h$ is not sent to the root at infinity of $h'$, as this would preserve the marked point and thus be an isomorphism of pointed curves.  Therefore, another root of $h$ must be sent to infinity.  Let $r$ be such a root of $h$, and let $\varphi_r$ be an $\mathbb{F}_q$-isomorphism which sends $r$ to $\infty$, such that $\varphi_r(h)(x) = x^{2g+1}+\tilde{h}(x)$, with $\deg(\tilde{h})\leq 2g-1$.

    Let $C'' \colon y^2=\varphi_r(h)(x)$.  Then, $\varphi_{r} \circ \psi^{-1} \colon C' \to C''$ is an $\mathbb{F}_q$-isomorphism that preserves the marked point at infinity. Since $C'\in\mathcal{F}_{g,q}$, by \cref{prop:iso_pointed}, the coefficients of the $x^{2g-1}$ term and the $x^{2g-2}$ term in $\tilde{h}(x)$ do not vanish. We can further assume that they are equal, up to a transformation of the form given in \cref{prop:iso_pointed}.  Thus, the pointed curve $(C'',\infty)$ is in $\mathcal{F}_{g,q}$ and is $\mathbb{F}_q$-isomorphic to $(C',\infty)$. We use \cref{cor:unicity_family} to conclude that $\varphi_r(h) = h'$. 

    We observe that the number of curves which are $\mathbb{F}_q$-isomorphic to $C$ is bounded by $2g+1$. Moreover, all such curves can be explicitly computed by finding a transformation that sends a root of $h$ to infinity, and then normalizing the resulting polynomial in the correct way. If at least one of the coefficients for the $x^{2g-1}$ term or the $x^{2g-2}$ term vanishes, then the corresponding $\mathbb{F}_q$-isomorphic curve does not belong to $\mathcal{F}_{g,q}$ and we do not have to consider it.
\end{proof}

By \cref{prop:algo}, given an arbitrary pointed curve $(C, \infty) \in \mathcal{F}_{g,q}$, one can explicitly determine the $\mathbb{F}_q$-isomorphism class of $C$.  This gives rise to the following algorithm, which, given a pointed curve $(C, \infty) \in \mathcal{F}_{g,q}$, returns the smallest pointed curve $(C', \infty) \in \mathcal{F}_{g,q}$ such that $C$ and $C'$ are $\mathbb{F}_q$-isomorphic, up to a fixed, chosen ordering on $\mathbb{F}_{q}$.  Furthermore, one can extend the order on $\mathbb{F}_q$ to a lexicographic order $\mathbb{F}_q[x]$, using $1 < x < x^2 < \cdots$.

\begin{algorithm}\label{alg:Fq_isomclass}
    \mbox{}
    \begin{enumerate}[leftmargin=6em]
        \item[\textsc{Input:}] A polynomial $f(x)$, such that $(C:y^2=f(x),\infty) \in \mathcal{F}_{g,q}$.
        \item[\textsc{Output:}] The smallest polynomial $h(x)$ such that $(C':y^2=h(x),\infty) \in \mathcal{F}_{g,q}$ and $C \cong C'$ over $\mathbb{F}_q$.
    \end{enumerate}
    
    \begin{enumerate}
        \item CurvesIso $\gets [f]$.
        \item Compute the set $R$ of roots of $f$ in $\mathbb{F}_q$.
        \item For each root in $R$ do:
            \begin{enumerate}
                \item Choose a Möbius transformation $M$ which sends that root to $\infty$.
                \item $\tilde{f}\gets f(M^{-1})$.
                \item Renormalize $\tilde{f}$ by its coefficient in $x^{2g+1}$.
                \item $\alpha\gets$ Coefficient of $\tilde{f}$ in $x^{2g}$ divided by $2g+1$. 
                \item $\tilde{f}\gets \tilde{f}(x-\alpha)$.
                \item $a,b =$ Coefficients in $x^{2g-1},x^{2g-2}$ of $\tilde{f}$.
                \item If $ab \neq 0$ then: 
                \begin{enumerate}
                    \item $u\gets a/b$.
                    \item $\tilde{f}\gets u^{2g+1}f(x/u)$.
                    \item Append CurvesIso with $\tilde{f}$.
                \end{enumerate}
            \end{enumerate}
        \item Return the minimum of CurvesIso.
    \end{enumerate}
\end{algorithm}

With the necessary background in hand, we now summarize the \textbf{family method} for sampling hyperelliptic curves. First, we give an explicit ($2g-1$)-dimensional family $\mathcal{F}_{g,q}$ of pointed hyperelliptic curves $(C, \infty)$, which are unique up to geometric isomorphism (see \cref{cor:unicity_family}). We then take a random sample of pointed curves in $\mathcal{F}_{g,q}$, which is easily done since the curves in $\mathcal{F}_{g,q}$ are defined explicitly by their models. Finally, we use \cref{alg:Fq_isomclass} to remove the $\mathbb{F}_q$-isomorphic curves from the sample. See \cref{Sec:Family_Data} for the data generated from this method.

\subsection{Galois Type Method} \label{Sec:GaloisType}

We now turn our attention to an alternate method of sampling hyperelliptic curves to use in the cases when the family method is not applicable and as reinforcement of the family method results.  \cite{How25} describes an algorithm to enumerate all hyperelliptic curves over $\mathbb{F}_q$ up to $\mathbb{F}_q$-isomorphism for a specified genus. We have implemented a modified version of part of that algorithm to generate data sets of hyperelliptic curves over $\mathbb{F}_q$ up to geometric isomorphism.

\cite{How25}*{Algorithm 7.1} can be used to generate a set of hyperelliptic curves over $\mathbb{F}_q$ with models of the form $y^2 = f(x)$, where
\begin{enumerate}
    \item the curves are unique up to $\mathbb{F}_q$-isomorphism, and
    \item the polynomial $f$ splits completely over $\mathbb{F}_q$.
\end{enumerate}
The second condition inspired the name for this sampling method, as the algorithm enumerates curves with Galois type $(1, \ldots, 1)$. We restrict ourselves to enumerating curves with this Galois type for two reasons. The first is that each curve in the sample will be unique up to geometric isomorphism (as opposed to just unique up to $\mathbb{F}_q$-isomorphism).  The second is that the Lang-Weil bounds can be applied to samples of curves with this Galois type.  To prove the former assertion, we present the following proposition.

\begin{proposition}
    Let $C$ and $C'$ be two hyperelliptic curves of positive genus defined over $\mathbb{F}_q$ such that all of their branch points are in $\mathbb{F}_q$.  Moreover, assume that $C$ and $C'$ are geometrically isomorphic.  Then, $C$ and $C'$ are $\mathbb{F}_q$-isomorphic.
\end{proposition}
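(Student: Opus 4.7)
The plan is to take any geometric isomorphism $\varphi\colon C \to C'$ and show that it descends, possibly after a harmless modification, to an $\mathbb{F}_q$-isomorphism. By \cref{prop:iso_hyp_cur}, such a $\varphi$ is encoded by a pair $(M, e)$ with $M \in \mathrm{GL}_2(\overline{\mathbb{F}}_p)$, say with bottom row $(c,d)$, and $e \in \overline{\mathbb{F}}_p^*$. I would handle the Möbius part $M$ and the scalar $e$ separately.

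To descend $M$, the key observation is that the associated Möbius transformation must send the branch locus of $C$ bijectively onto that of $C'$. The positive-genus assumption guarantees that each branch locus contains $2g+2 \geq 4$ points, all of which lie in $\mathbb{F}_q$ by hypothesis. Since a Möbius transformation is uniquely determined by its action on any three distinct points, I would fix three $\mathbb{F}_q$-rational branch points of $C$ together with their three $\mathbb{F}_q$-rational images on $C'$; the unique Möbius transformation realizing this prescription is automatically defined over $\mathbb{F}_q$, so $M$ may be taken in $\mathrm{PGL}_2(\mathbb{F}_q)$.

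Writing the curves as $C\colon y^2 = f_1(x)$ and $C'\colon y^2 = f_2(x)$ with $f_1, f_2 \in \mathbb{F}_q[x]$, the explicit formula in \cref{prop:iso_hyp_cur} yields the polynomial identity $e^2 f_1(x) = (cx+d)^{2g+2} f_2(M(x))$. Once $M$ is known to be defined over $\mathbb{F}_q$, every quantity except possibly $e^2$ lives in $\mathbb{F}_q[x]$, and comparing any fixed nonzero coefficient on the two sides forces $e^2 \in \mathbb{F}_q$.

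The main obstacle I anticipate is the final step of promoting $e^2 \in \mathbb{F}_q$ to $e \in \mathbb{F}_q$, since a priori this could fail because of a genuine quadratic-twist obstruction. I would address this by invoking the normalization of the Weierstrass models built into the enumeration following \cite{How25} — which constrains the leading coefficients of $f_1$ and $f_2$ so that the identity above forces $e^2$ to be a square in $\mathbb{F}_q$ — and, if a remaining sign ambiguity appears, absorbing it into the hyperelliptic involution $(x,y)\mapsto(x,-y)$ of $C$. Once $e \in \mathbb{F}_q^*$ is achieved, the pair $(M,e)$ already provides an $\mathbb{F}_q$-isomorphism $C \to C'$, completing the proof.
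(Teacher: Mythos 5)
Your treatment of the M\"obius part is exactly the paper's argument: the isomorphism must carry the $2g+2 \geq 3$ rational branch points of $C$ to the rational branch points of $C'$, and since a M\"obius transformation is determined by the images of three distinct points, $M$ may be taken in $\mathrm{PGL}_2(\mathbb{F}_q)$. Where you go further is in honestly isolating the scalar: from $e^2 f_1(x) = (cx+d)^{2g+2} f_2(M(x))$ one only gets $e^2 \in \mathbb{F}_q$, and you correctly recognize that promoting this to $e \in \mathbb{F}_q$ is a genuine quadratic-twist issue. The paper's own proof simply asserts at this point that ``it follows that $e$ must be also rational,'' so you have put your finger on precisely the step that the paper glosses over.

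Unfortunately, neither of your proposed patches closes the gap. The proposition as stated imposes no normalization on the models, so there are no leading-coefficient constraints to invoke; and even for the monic, completely split models produced by the Galois-type method the obstruction survives: taking $M(x) = ax$ with $a$ a non-square carries the monic split polynomial $f_1(x) = \prod_i (x-\gamma_i)$ to the monic split polynomial $f_2(x) = \prod_i (x - a\gamma_i)$ with $e^2 = a^{2g+1}$, which is a non-square. Composing with the hyperelliptic involution only replaces $e$ by $-e$ and cannot restore rationality. In fact the statement itself fails in general: for $f \in \mathbb{F}_q[x]$ split with a generic root configuration (trivial reduced automorphism group) and $\lambda$ a non-square, the curves $y^2 = f(x)$ and $y^2 = \lambda f(x)$ satisfy all the hypotheses and are geometrically isomorphic, but they are nontrivial quadratic twists of one another and hence not $\mathbb{F}_q$-isomorphic whenever their point counts over $\mathbb{F}_q$ differ (which happens as soon as the Frobenius trace is nonzero). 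What the sampling method actually needs --- and what both your argument and the paper's do establish --- is only the rationality of $M$: geometrically isomorphic curves with fully rational branch loci have $\mathrm{PGL}_2(\mathbb{F}_q)$-equivalent branch loci, and \cref{GaloisTypeAlg} already deduplicates by $\mathrm{PGL}_2(\mathbb{F}_q)$-orbits of root sets, so each geometric isomorphism class still contributes at most one curve to the sample.
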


\begin{proof}
    Let $\psi \colon C \to C'$ be a (geometric) isomorphism. By \cref{prop:iso_hyp_cur}, the isomorphism $\psi$ is given by a Möbius transformation with corresponding matrix $M$ and an element $e \in \overline{\mathbb{F}}_p^\times$.  Therefore, since $\psi$ must send a branch point of $C$ to a branch point of $C'$, $M$ must send the $2g+2$ branch points of $C$ to those of $C'$. It is known that fixing the image of three distinct elements in the domain of a Möbius transformation gives a unique corresponding transformation, with an explicit equation. Here, the image of the $2g+2 \geq 3$ branch points must lie in $\mathbb{F}_q$, thus giving a rational Möbius transformation. It follows that $e$ must be also rational, making $\psi$ an $\mathbb{F}_q$-isomorphism.
\end{proof}

To show the latter assertion, i.e.\ that the Lang-Weil bounds apply to samples of curves with Galois type $(1, \ldots ,1)$, we make the following argument.  Note that there exists an isomorphism 
\[ \varphi \colon (\text{Sym}^{2g+2}(\mathbb{P}^1)\setminus \Delta_W)/\mathrm{PGL}_2 \to \mathcal{H}_g, \]
where $\Delta_W$ is the weak diagonal. The map $\varphi$ sends the class of $(a_1, \ldots ,a_{2g+2})$ to the isomorphism class of the hyperelliptic curve branched at $a_1, \ldots , a_{2g+2}$. If $P \in (\text{Sym}^{2g+2}(\mathbb{P}^1) \setminus \Delta_W)/\mathrm{PGL}_2$ is $\mathbb{F}_{p^r}$-rational, then so is $\varphi(P)$, but the converse does not hold in general. We observe that every hyperelliptic curve has Galois type $(1,\ldots, 1)$ over some field, which may not be the field of definition of the curve. Hence, the space $\mathcal{H}_g^f$ can have more components over $\mathbb{F}_{p^r}$ than the $p$-rank $f$ stratum of $(\text{Sym}^{2g+2}(\mathbb{P}^1) \setminus \Delta_W)/\mathrm{PGL}_2$, but they have the same number of geometric components because of the isomorphism between them. Therefore, the Lang-Weil bounds provide that, for $R$ sufficiently large, samples $S$ with Galois type $(1, \ldots , 1)$ over $\mathbb{F}_{p^r}$, with $r\leq R$, satisfy 
\[ c_{g,f,p} \approx \max_{r\leq R} \frac{p^{r(g-f)} \# (S \cap \mathcal{H}_g^f)}{ \# S}, \]
Thus, we implement a modified version of \cite{How25}*{Algorithm 7.1} to generate data sets of hyperelliptic curves.

\begin{algorithm}{\label{GaloisTypeAlg}}
    Let $p$ be an odd prime, $r \geq 1$ an integer, and $q = p^r$.

    \begin{enumerate}[leftmargin=6em]
        \item[\textsc{Input:}] An integer $g \in \{3, 4, 5\}$, a prime power $q > 2g + 1$, and an integer $s \geq 1$.
        \item[\textsc{Output:}] A list $L$ of size $s$ of polynomials $f(x) \in \mathbb{F}_q[x]$ that split completely over $\mathbb{F}_q$, and which represent hyperelliptic curves $y^2 = f(x)$ of genus $g$ that are unique up to geometric isomorphism.
    \end{enumerate}

    \begin{enumerate}
        \item Let $L$ be the empty list, set $a_1 := \infty$ and $a_2 := 0$, and define $n = 2g + 2$.
        \item For every set $\{ a_3, \dotsc, a_n \}$ of distinct elements of $\mathbb{F}_q \setminus \{ 0 \}$ do:
        \begin{enumerate}
            \item Set $f := z \prod_{i=2}^{n} (x - a_i z)$.
            \item Set $F := \{ \Gamma(f) \}$, where $\Gamma$ ranges over elements of $\PGL_2(\mathbb{F}_q)$ that send two elements of $\{a_i\}$ to $\infty$ and $0$.
            \item If $f$ is the smallest element of $F$ under the ordering $<$, and the automorphism group of $f$ is trivial, append $f$ to $L$.
            \item If $\#L = s$, return $L$. Otherwise continue to the next set $\{a_i\}$.
        \end{enumerate}
        \item Return $L$.
    \end{enumerate}

\end{algorithm}

We note that, unlike the algorithm in \cite{How25}, we exclude hyperelliptic curves with extra automorphisms. We do so for two reasons. First, excluding these curves was less computationally expensive than including them, and only slight modifications to the original code from \cite{How25} were necessary to exclude them. Second, the exclusion of curves with extra automorphisms does not result in a diluted data set. By \cite{GV08}*{Prop. 2.1}, the locus of hyperelliptic curves with extra automorphisms is of codimension $g - 1$ in $\mathcal{H}_g$. The non-ordinary locus $\mathcal{H}_{g}^{g-1}$ is of codimension $1$ in $\mathcal{H}_g$, so for $g \geq 3$ the non-ordinary locus is much larger. Thus, the curves with extra automorphisms do not constitute a significant percentage of the total number of non-ordinary curves in the cases we consider. 

We also note that this method does not produce a random sample of hyperelliptic curves, as we return a sample of size $s$ whose roots are the smallest possible under the ordering on $\mathbb{F}_q$. However, we do not expect that this enumeration method biases our data towards particular $p$-rank strata. See \cref{Sec:GT_Data} for our data generated via this method.

\subsection{Computation of $p$-Ranks}

Having collected a sample of hyperelliptic curves, we next need to compute the $p$-ranks of these curves.  In practice, we have implemented the following algorithm to compute the $p$-ranks of hyperelliptic curves using Hasse-Witt matrices. See \cref{Sec:HasseWitt} for more details.

\begin{algorithm}
    Let $C/\mathbb{F}_q$ be a smooth hyperelliptic curve of genus $g$ defined over the finite field $\mathbb{F}_q$ with $q = p^r$ for $p$ prime. Let $C$ have model $y^2 = f(x)$, where $f(x)$ is a polynomial of degree $2g + 1$ or $2g + 2$.
   
    \begin{enumerate}[leftmargin=6em]
        \item[\textsc{Input:}] The polynomial $f(x) \in \mathbb{F}_q[x]$ and prime power $q = p^r$.
        \item[\textsc{Output:}] The $p$-rank of $C:y^2 = f(x)$.
    \end{enumerate}
    
    \begin{enumerate}
        \item Compute $f(x)^{\frac{p-1}{2}} = \sum_{m} c_m x^m$ and $g = \lfloor(\deg(f) - 1)/2 \rfloor$. 
        \item Define the Hasse-Witt matrix $A = (a_{i,j})$ by setting $a_{i,j} = c_{jp-i}$ for all $0 \leq i, j \leq g$.
        \item If $q = p^1$, return the rank of $A^g$, otherwise continue to step (4).
        \item If $q = p^r$ for $r > 1$, return the rank of $A^{(g)} = A A^{\sigma} \dotsm A^{\sigma^{g-1}}$.
    \end{enumerate}
\end{algorithm}

\section{Non-Ordinary Locus} \label{sec:nonord}

In the subsequent section, we provide an overview of the data collected regarding the number of components of the non-ordinary locus.  \cref{Sec:Family_Data,Sec:GT_Data} provide an analysis of the data collected using the sampling methods described in \cref{Sec:Family,Sec:GaloisType}, respectively.  Then, we present our main conjecture based on these findings in \cref{Sec:Conjectures}.

The analysis for both sampling methods followed the same general structure.  First, a sample of curves over $\mathbb{F}_q$ was generated using one of the two algorithms previously discussed; we will denote the number of curves in this sample by $s$.  Then, the $p$-rank for each curve in the sample was calculated using the method described in \cref{Sec:HasseWitt}.  With the $p$-rank data in hand, we could next count the number of non-ordinary curves, which we will denote $N$. 

Using the above information, we computed \[M_{g,p,r} = \frac{N}{s} \cdot p^r\]
for various $p$ and appropriate values of $r$.  (We give explicit ranges for $p$ and values for $r$ in the subsequent sections.)  Recall that our goal is to determine the number of geometrically irreducible components of the non-ordinary locus, $c_{g,g-1,p} \in \mathbb{Z}_{>0}$.  As outlined in \cref{Sec:LangWeil}, we expect the ratio of non-ordinary curves to sample curves to be proportional to $c_{g, g-1, p, r}$, as defined in \cref{eq: cgfpr approx}.  Therefore, by computing $M_{g,p,r}$, we can approximate $c_{g,g-1,p,r}$, which in turn allows us to find  
\[c_{g,g-1,p}\ = \max_{r\leq R} c_{g,g-1,p,r} \approx \max_{r\leq R} M_{g,p,r}\]
for sufficiently large $R$.

We then calculated several statistics on the data sets consisting of values of $M_{g,p,r}$ for every prime $p$ in the prime range for a given field of definition.  We determined the median and mean of these data sets to estimate the number of geometrically irreducible components.  We also found the minimum, maximum, and standard deviation of the data sets to emphasize the uniformity of the data.

\subsection{Family Method Data Analysis}  \label{Sec:Family_Data}
This section details the data collected using the sampling method described in \cref{Sec:Family}.  \cref{table:Family3} gives the data for the genus $g = 3$ case and \cref{table:Family4} for the $g = 4$ case.  We were also able to use this method to collect data for the cases of $5 \leq g \leq 20$.  The data collected for these genera indicate similar phenomena to those observed in the $g = 3$ and $g = 4$ cases, so we use these small genera as examples.  The complete data set can be found in \cite{Code}.  

\begin{table}[b]
    \small
    \caption{Analysis of the data collected using the family sampling method for $g = 3$.}
    \label{table:Family3}
    \tabcolsep=10pt
    \def\arraystretch{1.2}
    \renewcommand{\familydefault}{\ttdefault}
    \makebox[\textwidth][c]{
    \begin{tabular}{cc S[table-format=9.0]  S[table-format=1.3] S[table-format=1.3] S[table-format=1.3] S[table-format=1.3] S[table-format=1.3]}
        \toprule
        {Field} & {Prime Range} & {Sample Size} & {Median} & {Mean} & {Min.} & {Max.} & {Std. Dev.}\\
        \midrule 
        $\mathbb{F}_{p}$ & 5 - 1000 & 10000000* & 1.012 & 1.016 & 0.982 & 1.092 & 0.020 \\
        $\mathbb{F}_{p^2}$ & 3 - 300 & 10000000 & 1.006 & 1.009 & 0.877 & 1.161 & 0.045 \\
        $\mathbb{F}_{p^3}$ & 3 - 100 & 100000000 & 0.995 & 0.985 & 0.794 & 1.092 & 0.059 \\
        $\mathbb{F}_{p^4}$ & 3 - 30 & 100000000 & 1.002 & 1.005 & 0.976 & 1.054 & 0.026 \\
        \bottomrule
    \end{tabular}
    }
\end{table}

\begin{table}[t]
    \small
    \caption{Analysis of the data collected using the family sampling method for $g = 4$.}
    \label{table:Family4}
    \tabcolsep=10pt
    \def\arraystretch{1.2}
    \renewcommand{\familydefault}{\ttdefault}
    \makebox[\textwidth][c]{
    \begin{tabular}{cc S[table-format=9.0]  S[table-format=1.3] S[table-format=1.3] S[table-format=1.3] S[table-format=1.3] S[table-format=1.3]}
        \toprule
        {Field} & {Prime Range} & {Sample Size} & {Median} & {Mean} & {Min.} & {Max.} & {Std. Dev.}\\
        \midrule 
        $\mathbb{F}_{p}$ & 5 - 1000 & 10000000* & 1.001 & 1.001 & 0.979 & 1.027 & 0.007 \\
        $\mathbb{F}_{p^2}$ & 5 - 300 & 10000000 & 1.001 & 0.998 & 0.831 & 1.118 & 0.049 \\
        $\mathbb{F}_{p^3}$ & 5 - 100 & 100000000 & 1.001 & 0.999 & 0.912 & 1.058 & 0.037 \\
        $\mathbb{F}_{p^4}$ & 5 - 30 & 100000000 & 1.001 & 0.998 & 0.954 & 1.049 & 0.032 \\
        \bottomrule
    \end{tabular}
    }
\end{table}

Further, we note that for $r = 1$ in each genus, we were able to collect data for the entire family of curves for a small number of primes $p$.  The sample sizes in these cases are noted with an asterisk (*), as the number of curves in the entire family exceeded the sample size listed for certain $p$.  Otherwise, $s$ is the upper bound of the sample size, used for the majority of the primes $p$.  For small $p$, the number of curves in the complete family does not always reach the intended sample size.  For example, for $g = 4$ and $r = 1$, we were able to compute the complete family for $p \leq 23$.  For $p = 5$, the entire family has only \texttt{38\,556} curves.  However, for $p = 23$, the entire family has \texttt{2\,065\,333\,130} curves, which far exceeds $s$. 

To further support our data analysis, \cref{fig:1} illustrates the values of $M_{3, p, r}$ that we calculated for each prime $p$ and the appropriate value of $r$.  We note that for the majority of data points, the percent error, i.e.\ the difference between the calculated $M$-value and the expected value of $c_{g,g-1,p} = 1$, is less than 5\%. 

\begin{figure}[b]
\begin{subfigure}[b]{0.49\textwidth}
\begin{tikzpicture}
\begin{axis}[
    title={$g = 3$, $r = 1$},
    width={0.95\textwidth},
    xlabel={$p$},
    xmin=0, xmax=1000,
    ymin=0.75, ymax=1.25,
    ymajorgrids=true,
    grid style=dashed,
    ytick = {0.8, 0.9, 1, 1.1, 1.2}, 
    point meta min=0.75,
    point meta max=1.25,
    point meta=y,
    ylabel={$M_{3, p, r}$},
    ylabel style={rotate=-90}
]
\addplot+[
    colormap name=blueorange-mid,
    colormap access=piecewise constant, 
    scatter,
    mark=*,
    only marks,
    ]
    coordinates {
        (5, 0.981595092)
        (11, 1.030507527)
        (13, 1.026386472)
        (17, 1.027388144)
        (19, 1.037666921)
        (23, 1.049731092)
        (27, 0.9787638195)
        (29, 1.035974537)
        (31, 1.044213984)
        (37, 1.003624378)
        (41, 1.035050649)
        (43, 1.014052024)
        (47, 1.050014815)
        (53, 1.0212729)
        (59, 1.0671743)
        (61, 1.025105)
        (67, 1.0062998)
        (71, 1.0924841)
        (73, 1.0056261)
        (79, 1.0551872)
        (83, 1.0507219)
        (89, 1.0237403)
        (97, 1.0022816)
        (101, 1.0374215)
        (103, 1.0386314)
        (107, 1.0266864)
        (109, 1.0112911)
        (113, 1.0118359)
        (127, 1.0359009)
        (131, 1.0603926)
        (137, 1.0171017)
        (139, 1.0248331)
        (149, 1.0185342)
        (151, 1.0416433)
        (157, 0.9987869)
        (163, 0.9877637)
        (167, 1.060617)
        (173, 1.0076904)
        (179, 1.0310758)
        (181, 1.0078442)
        (191, 1.0699438)
        (193, 0.9980802)
        (197, 1.0042075)
        (199, 1.0381432)
        (211, 1.0180328)
        (223, 1.0303492)
        (227, 1.0195478)
        (229, 0.9925776)
        (233, 1.0138762)
        (239, 1.0538227)
        (241, 1.0061027)
        (251, 1.0427544)
        (257, 1.0130426)
        (263, 1.0526575)
        (269, 1.0123815)
        (271, 1.0325642)
        (277, 0.9998869)
        (281, 1.0146348)
        (283, 1.0087535)
        (293, 1.0213687)
        (307, 1.0088634)
        (311, 1.0656726)
        (313, 0.9999724)
        (317, 0.9989938)
        (331, 1.0139854)
        (337, 0.9992724)
        (347, 1.0171958)
        (349, 1.0153457)
        (353, 1.0026965)
        (359, 1.0454439)
        (367, 1.0227923)
        (373, 1.0039668)
        (379, 1.0001052)
        (383, 1.0356703)
        (389, 1.0149788)
        (397, 0.9847188)
        (401, 1.015733)
        (409, 1.012275)
        (419, 1.0262986)
        (421, 0.999875)
        (431, 1.0541398)
        (433, 0.9920463)
        (439, 1.0400788)
        (443, 0.9872255)
        (449, 1.0055804)
        (457, 0.9934266)
        (461, 1.0181646)
        (463, 1.0018857)
        (467, 1.0066652)
        (479, 1.0532731)
        (487, 1.0059959)
        (491, 1.0259445)
        (499, 1.0043373)
        (503, 1.037689)
        (509, 0.9996251)
        (521, 0.999278)
        (523, 1.0156137)
        (541, 1.0033386)
        (547, 1.0065894)
        (557, 0.9924626)
        (563, 1.0321479)
        (569, 1.009975)
        (571, 0.9987932)
        (577, 1.0083075)
        (587, 1.0106379)
        (593, 1.0193077)
        (599, 1.0351918)
        (601, 1.0083578)
        (607, 1.01369)
        (613, 0.9994352)
        (617, 1.0054015)
        (619, 1.007113)
        (631, 1.0295396)
        (641, 1.0093186)
        (643, 0.9864263)
        (647, 1.0256244)
        (653, 0.9919723)
        (659, 1.017496)
        (661, 1.0055132)
        (673, 0.99604)
        (677, 1.0022308)
        (683, 0.9868667)
        (691, 1.008169)
        (701, 1.0041124)
        (709, 0.986928)
        (719, 1.0400335)
        (727, 1.0193994)
        (733, 1.0019377)
        (739, 1.0227021)
        (743, 1.0130805)
        (751, 1.0118223)
        (757, 0.9876579)
        (761, 1.0036068)
        (769, 1.0097739)
        (773, 0.9899811)
        (787, 1.0027167)
        (797, 1.005017)
        (809, 1.0294525)
        (811, 0.9969623)
        (821, 1.0085164)
        (823, 0.9847195)
        (827, 1.0013316)
        (829, 1.0214109)
        (839, 1.0432126)
        (853, 0.9970717)
        (857, 1.0092889)
        (859, 0.9960105)
        (863, 1.0190304)
        (877, 0.9942549)
        (881, 1.0065425)
        (883, 0.993375)
        (887, 1.03779)
        (907, 0.9952511)
        (911, 1.016676)
        (919, 1.0135651)
        (929, 1.0097301)
        (937, 1.0150521)
        (941, 1.0144921)
        (947, 1.0008843)
        (953, 1.0213301)
        (967, 1.0151566)
        (971, 1.0241137)
        (977, 0.9891148)
        (983, 1.007575)
        (991, 1.0007118)
        (997, 0.9881267)
    };
\end{axis}
\end{tikzpicture}
\end{subfigure}
\hfill
\begin{subfigure}[b]{0.49\textwidth}
\begin{tikzpicture}
\begin{axis}[
    title={$g = 3$, $r = 2$},
    width={0.95\textwidth},
    xlabel={$p$},
    xmin=0, xmax=300,
    ymin=0.75, ymax=1.25,
    ymajorgrids=true,
    grid style=dashed,
    ytick = {0.8, 0.9, 1, 1.1, 1.2}, 
    point meta min=0.75,
    point meta max=1.25,
    point meta=y,
]
\addplot+[
    colormap name=blueorange-mid,
    colormap access=piecewise constant, 
    scatter,
    mark=*,
    only marks,
    ]
    coordinates {
        (3, 0.9337354821)
        (5, 1.000328754)
        (11, 1.002727)
        (13, 1.0011222)
        (17, 1.0033213)
        (19, 1.005385)
        (23, 1.0051529)
        (29, 1.004154)
        (31, 1.0022269)
        (37, 0.9748649)
        (41, 0.9956563)
        (43, 1.0117728)
        (47, 1.0263014)
        (53, 1.0134872)
        (59, 0.9924331)
        (61, 0.9842045)
        (67, 0.983091)
        (71, 0.9789622)
        (73, 1.0306286)
        (79, 0.9873262)
        (83, 0.9520598)
        (89, 1.0146801)
        (97, 1.0302855)
        (101, 1.0129593)
        (103, 1.0343775)
        (107, 0.9891936)
        (109, 0.9873111)
        (113, 1.0662115)
        (127, 0.9919335)
        (131, 1.0279439)
        (137, 1.051064)
        (139, 1.0375377)
        (149, 0.9724038)
        (151, 0.9188803)
        (157, 1.0180037)
        (163, 0.9777392)
        (167, 1.0235263)
        (173, 1.0834298)
        (179, 1.0124956)
        (181, 1.0286954)
        (191, 1.0871338)
        (193, 1.005723)
        (197, 1.0051531)
        (199, 0.9781447)
        (211, 1.0729561)
        (223, 1.094038)
        (227, 1.0460387)
        (229, 1.0593082)
        (233, 1.0152043)
        (239, 0.9482086)
        (241, 0.8770231)
        (251, 1.0017159)
        (257, 1.0633889)
        (263, 1.0444519)
        (269, 0.9045125)
        (271, 1.0061417)
        (277, 1.0128228)
        (281, 1.1607267)
        (283, 1.0171303)
        (293, 0.9872635)
    };
\end{axis}
\end{tikzpicture}
\end{subfigure}
\newline

\begin{subfigure}[b]{0.49\textwidth}
\begin{tikzpicture}
\begin{axis}[
    title={$g = 3$, $r = 3$},
    width={0.95\textwidth},
    xlabel={$p$},
    xmin=0, xmax=100,
    ymin=0.75, ymax=1.25,
    ymajorgrids=true,
    grid style=dashed,
    ytick = {0.8, 0.9, 1, 1.1, 1.2}, 
    point meta min=0.75,
    point meta max=1.25,
    point meta=y,
    ylabel={$M_{3, p, r}$},
    ylabel style={rotate=-90}
]
\addplot+[
    colormap name=blueorange-mid,
    colormap access=piecewise constant, 
    scatter,
    mark=*,
    only marks,
    ]
    coordinates {
        (3, 0.9787638195)
        (5, 1.00020125)
        (11, 1.00079221)
        (13, 0.99488948)
        (17, 1.01217626)
        (19, 1.0144461)
        (23, 1.02214967)
        (29, 0.99092507)
        (31, 0.99323194)
        (37, 1.04193221)
        (41, 0.99453003)
        (43, 0.97396075)
        (47, 0.98424204)
        (53, 0.99300959)
        (59, 1.01046468)
        (61, 1.01460507)
        (67, 1.01357131)
        (71, 1.09162855)
        (73, 0.8947391)
        (79, 0.79379279)
        (83, 0.93201281)
        (89, 0.90236032)
        (97, 1.0039403)
    };
\end{axis}
\end{tikzpicture}
\end{subfigure}
\hfill
\begin{subfigure}[b]{0.49\textwidth}
\begin{tikzpicture}
\begin{axis}[
    title={$g = 3$, $r = 4$},
    width={0.95\textwidth},
    xlabel={$p$},
    xmin=0, xmax=30,
    ymin=0.75, ymax=1.25,
    ymajorgrids=true,
    grid style=dashed,
    ytick = {0.8, 0.9, 1, 1.1, 1.2}, 
    point meta min=0.75,
    point meta max=1.25,
    point meta=y,
]
\addplot+[
    colormap name=blueorange-mid,
    colormap access=piecewise constant, 
    scatter,
    mark=*,
    only marks,
    ]
    coordinates {
        (3, 0.99257643)
        (5, 0.9999625)
        (11, 1.00393337)
        (13, 0.97935669)
        (17, 0.97552528)
        (19, 1.0034717)
        (23, 1.02981488)
        (29, 1.05384869)
    };
\end{axis}
\end{tikzpicture}
\end{subfigure}
\caption{Values of $M_{3,p,r}$ for each prime $p$.}
\label{fig:1}
\end{figure}
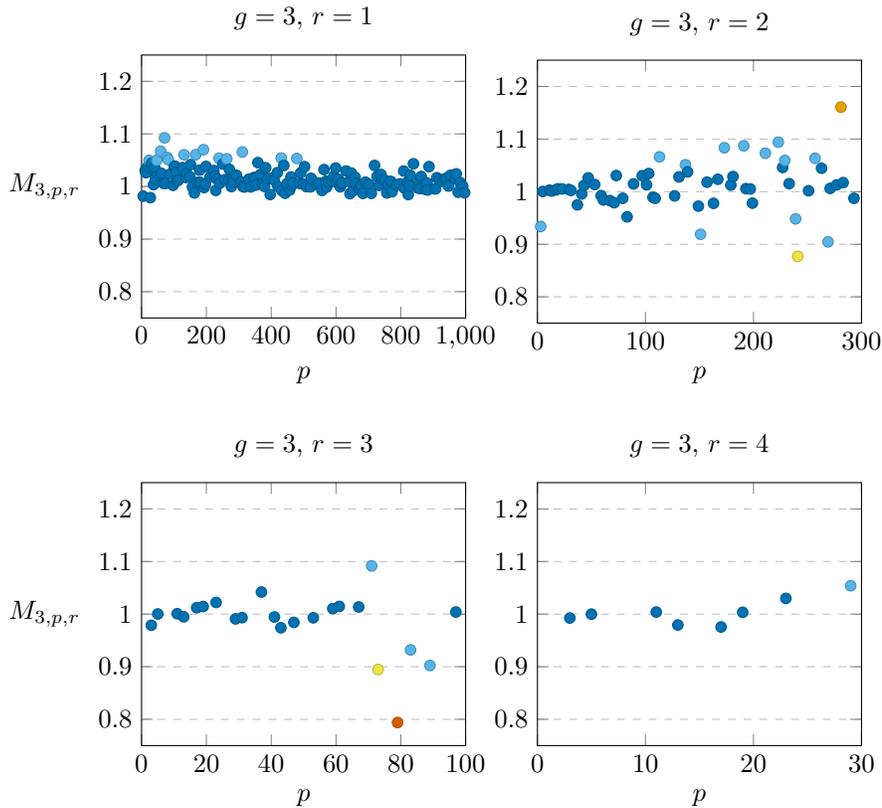

\subsection{Galois Type Method Data Analysis} \label{Sec:GT_Data}
\cref{table:GT3,,table:GT4} communicate the data collected using the Galois type method discussed in \cref{Sec:GaloisType} for the cases $g = 3$ and $g = 4$, respectively.  We also used this method in the $g = 5$ case and found similar statistics; the data can be found in \cite{Code}.  The Galois type method is more computationally expensive than the family method, so while this method could be extended to higher genera, we primarily used it to reinforce the results from the family method and expect similar behavior in higher genera.  

\begin{table}[b]
    \small
    \caption{Analysis of the data collected using the Galois type sampling method for $g = 3$.}
    \label{table:GT3}
    \tabcolsep=10pt
    \def\arraystretch{1.2}
    \renewcommand{\familydefault}{\ttdefault}
    \makebox[\textwidth][c]{
    \begin{tabular}{cc S[table-format=9.0]  S[table-format=1.3] S[table-format=1.3] S[table-format=1.3]  S[table-format=1.3]  S[table-format=1.3]}
        \toprule
        {Field} & {Prime Range} & {Sample Size} & {Median} & {Mean} & {Min.} & {Max.} & {Std. Dev.}\\
        \midrule 
        $\mathbb{F}_{p^2}$ & 17 - 300 & 10000000 & 1.006 & 1.006 & 0.858 & 1.174 & 0.056 \\
        $\mathbb{F}_{p^3}$ & 7 - 100 & 100000000 & 1.000 & 0.999 & 0.945 & 1.059 & 0.028 \\
        $\mathbb{F}_{p^4}$ & 5 - 30 & 100000000 & 0.998 & 1.002 & 0.982 & 1.033 & 0.019 \\
        \bottomrule
    \end{tabular}
    }
\end{table}

\begin{table}[b]
    \small
    \caption{Analysis of the data collected using the Galois type sampling method for $g = 4$.}
    \label{table:GT4}
    \tabcolsep=10pt
    \def\arraystretch{1.2}
    \renewcommand{\familydefault}{\ttdefault}
    \makebox[\textwidth][c]{
    \begin{tabular}{cc S[table-format=9.0]  S[table-format=1.3] S[table-format=1.3] S[table-format=1.3] S[table-format=1.3] S[table-format=1.3]}
        \toprule
        {Field} & {Prime Range} & {Sample Size} & {Median} & {Mean} & {Min.} & {Max.} & {Std. Dev.}\\
        \midrule 
        $\mathbb{F}_{p^2}$ & 11 - 300 & 10000000 & 1.008 & 1.009 & 0.896 & 1.228 & 0.053 \\
        $\mathbb{F}_{p^3}$ & 7 - 100 & 100000000 &  1.000 & 0.997 & 0.941 & 1.060 & 0.026 \\
        $\mathbb{F}_{p^4}$ & 5 - 30 & 100000000 & 1.003 & 0.998 & 0.960 & 1.022 & 0.019 \\
        \bottomrule
    \end{tabular}
    }
\end{table}

We note that for this data, the lower bound of the prime range is the first prime at which the full sample size is reached. Therefore, every sample has exactly the size listed.  As in the previous case, we report the median, mean, minimum, maximum, and standard deviation of the data sets consisting of values of $M_{g,p,r}$ for every prime $p$ in the prime range for a given field of definition.

\begin{remark} \label{rmk: GT r=1 abundance}
When the Galois type method is applied over $\mathbb{F}_p$, the results exhibit abnormal behavior.  In particular, there appears to be an abundance of non-ordinary curves whose branch points are defined over $\mathbb{F}_p$ when $p \equiv 3 \bmod 4$.  In this case, the values of $M_{g,p,1}$ sit between $1.5$ and $4$, but are not particularly close to any integer.  Furthermore, we note that this abundance of non-ordinary curves disappears when the Galois type method is applied over $\mathbb{F}_{p^r}$ for $r>1$.  Additionally, when we instead consider all the Galois types over $\mathbb{F}_p$ for $3 \leq p \leq 47$, there is again no difference between $p \equiv 1 \bmod 4$ and $p \equiv 3 \bmod 4$; in both cases, the data suggest irreducibility of the non-ordinary locus.

We have no explanation presently as to why this phenomenon occurs.  Moreover, our queries failed to produce any hyperelliptic curves of $p$-rank $0$ whose branch points were all defined over $\mathbb{F}_p$ when $p \equiv 1 \bmod 4$.  Due to these oddities, we did not use the Galois type method over $\mathbb{F}_p$.  The latter observation, though, prompts the following conjecture.
\end{remark}

\begin{conjecture} \label{conj: 1mod4}
Assume $p \equiv 1 \bmod 4$. Let $C$ be a hyperelliptic curve of genus at least $3$ whose branch points are all defined over $\mathbb{F}_p$. Then, the $p$-rank of $C$ is at least $1$.
\end{conjecture}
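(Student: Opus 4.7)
The plan is to reformulate the conjecture as a non-nilpotency statement for the Hasse-Witt matrix and then exploit the arithmetic of $p \equiv 1 \pmod 4$ via character sums. Since every branch point of $C$ lies in $\mathbb{F}_p \cup \{\infty\}$, we may move a rational branch point to infinity and write $C : y^2 = f(x)$ with $f(x) = \prod_{k=1}^{n}(x - a_k)$, every $a_k \in \mathbb{F}_p$, and $n = 2g+1$ or $2g+2$. The Hasse-Witt matrix $A = (c_{jp - i})$ then has entries in $\mathbb{F}_p$, so the semilinear $g$th power $A^{(g)}$ coincides with the ordinary power $A^g$, and $f_C = \operatorname{rk}(A^g)$. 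The conjecture is therefore equivalent to $A$ not being nilpotent. The conjecture is also vacuous unless $p \ge 2g+1$, so in any non-trivial case $p > g$.

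First I would expand $f(x)^{(p-1)/2} = \prod_k (x-a_k)^{(p-1)/2}$ using the multinomial theorem to obtain each $c_m$ as a symmetric polynomial in $a_1, \dotsc, a_n$ weighted by products of binomial coefficients $\binom{(p-1)/2}{\ell}$ reduced mod $p$. Substituting into $a_{i,j} = c_{jp-i}$ yields formulas for the power traces $\operatorname{tr}(A^m)$ as multi-variable sums indexed by compositions. Since $p > g$, Newton's identities imply that $A$ is nilpotent if and only if $\operatorname{tr}(A^m) = 0$ in $\mathbb{F}_p$ for every $1 \le m \le g$, so the task reduces to exhibiting a single $m$ for which $\operatorname{tr}(A^m) \ne 0$.

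The crucial question is where the hypothesis $p \equiv 1 \pmod 4$ enters. My expectation is that, after a suitable reorganization, the power traces can be expressed in terms of the Legendre symbol $\chi$ applied to products of differences $a_i - a_j$, so that the sign of any candidate global cancellation is controlled by $\chi(-1)$. Since $\chi(-1) = +1$ precisely for $p \equiv 1 \pmod 4$, the complementary case $p \equiv 3 \pmod 4$ introduces a sign flip that would permit the wholesale cancellation needed for nilpotency -- consistent with \cref{rmk: GT r=1 abundance}, where $p$-rank zero curves with all branch points in $\mathbb{F}_p$ are seen to exist precisely in that congruence class. Executing this parity analysis will probably rely on classical mod-$p$ Jacobi-sum identities in the spirit of Yui's Cartier-Manin calculations.

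The main obstacle is this last step: producing an explicit mod-$p$ identity tied to $\chi(-1) = +1$ that forces non-vanishing of some $\operatorname{tr}(A^m)$. A naive dimension count is insufficient, since the locus of hyperelliptic curves with all branch points in $\mathbb{F}_p$ contributes on the order of $p^{2g-1}$ points to $\mathcal{H}_g(\mathbb{F}_p)$ while $\mathcal{H}_g^0$ has codimension $g$, so one heuristically expects an intersection of size $O(p^{g-1})$ rather than zero, and the conjectured emptiness must reflect an arithmetic obstruction rather than a geometric one. The bound $g \ge 3$ likely enters through the combinatorial shape of the character sums, as the low-degree symmetric-function degenerations that occur for $g \le 2$ would spoil the parity argument; the proof will accordingly need to handle $g \ge 3$ uniformly rather than by induction on $g$.
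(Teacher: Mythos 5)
The first thing to note is that the paper does not prove this statement: it appears only as a conjecture, motivated by the empirical observation in \cref{rmk: GT r=1 abundance} that the Galois-type searches over $\mathbb{F}_p$ produced no $p$-rank $0$ curves with all branch points rational when $p \equiv 1 \bmod 4$. There is therefore no proof in the paper to compare yours against, and a complete argument would be a new result rather than a reconstruction.

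Your opening reductions are correct and worth recording: with all $2g+2$ branch points in $\mathbb{P}^1(\mathbb{F}_p)$ (which forces $p \geq 2g+1$), one may take a model $y^2=f(x)$ with $f$ split over $\mathbb{F}_p$; the Hasse--Witt matrix $A$ then has entries in $\mathbb{F}_p$, so $A^{(g)}=A^g$ and $f_C=0$ exactly when $A$ is nilpotent; and since $p>g$, Newton's identities reduce nilpotency to the vanishing of $\operatorname{tr}(A^m)$ for $1\leq m\leq g$. Your heuristic that the rational-branch-point locus should meet $\mathcal{H}_g^0$ in roughly $p^{g-1}$ points, so that the conjectured emptiness must be an arithmetic rather than a dimension-theoretic phenomenon, is also a genuinely useful observation. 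But the argument stops precisely where it would have to begin: you never produce the expression of $\operatorname{tr}(A^m)$ in terms of Legendre symbols of differences of branch points, nor any identity showing how $\chi(-1)=+1$ obstructs the simultaneous vanishing of all the traces, and you acknowledge this yourself. The claim that the case $p\equiv 3\bmod 4$ admits a ``sign flip permitting cancellation'' is at this stage a guess consistent with the data, not a derivation. As written, this is a plausible research plan with a correct setup, but the central step is missing and the conjecture remains open after it.
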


\subsection{Main Conjecture} \label{Sec:Conjectures}

The data presented in \cref{Sec:Family_Data} and \cite{Code} provides strong evidence that $\mathcal{H}_g^{g-1} \otimes \Spec(\mathbb{F}_{p^r})$ is irreducible in the following cases:
\begin{itemize}
    \item $3 \leq g \leq 5$, $r=1$ and $5 \leq p < 1000$;
    \item $3 \leq g \leq 5$, $r=2$ and $3 \leq p < 300$;
    \item $3 \leq g \leq 5$, $r=3$ and $3 \leq p < 100$;
    \item $3 \leq g \leq 5$, $r=4$ and $3 \leq p < 30$;
    \item $6 \leq g \leq 20$, $r=1$ and $5 \leq p < 500$; 
    \item $6 \leq g \leq 20$, $r=2$ and $3\leq p < 100$.
\end{itemize}
Additionally, the data presented in \cref{Sec:GT_Data} and \cite{Code} provides strong evidence that the non-ordinary locus of $(\text{Sym}^{2g+2}(\mathbb{P}^1) \setminus \Delta_W)/\mathrm{PGL}_2 \otimes \Spec(\mathbb{F}_{p^r})$ is irreducible in the following cases:
\begin{itemize}
    \item $3 \leq g \leq 5$, $r=2$ and $17 \leq p < 300$;
    \item $3 \leq g \leq 5$, $r=3$ and $7 \leq p < 100$;
    \item $3 \leq g \leq 5$, $r=4$ and $5 \leq p < 30$.
\end{itemize}
Since $(\text{Sym}^{2g+2}(\mathbb{P}^1) \setminus \Delta_W)/\mathrm{PGL}_2$ is geometrically isomorphic to $\mathcal{H}_g$, this data also suggests irreducibility of $\mathcal{H}_g^{g-1} \otimes \Spec(\mathbb{F}_{p^r})$.

Note that when $p^r$ is small, there do not exist enough curves over $\mathbb{F}_{p^r}$ to give a reliable estimate. On the other hand, when $p^r$ is large compared to the sample size, the non-ordinary curves are too rare to give a reliable estimate.  If the number of components of $\mathcal{H}_g^{g-1} \otimes \Spec(\mathbb{F}_{p^r})$ would grow with $g$, $p$ or $r$, one would expect that such a phenomenon should be visible in the samples presented in this section. 

We suspect that the behavior does not change when $g>20$, when $r>4$ or when $p>1000$. For comparison, the irreducibility of $\mathcal{H}_1^0$ becomes apparent when $p^r=5^2$ (see \cite{Sil09}*{Theorem 4.1(c)}) and the irreducibility of $\mathcal{H}_2^0$ becomes apparent already when $p^r=7^2$ (see \cite{IKO86}*{Remark 2.17}).  In summary, given the behavior for smaller genera, we are confident in making the following conjecture.

\begin{conjecture} \label{conj: non-ordinary}
For $g \geq 3$ and $p\geq 3$, the non-ordinary locus $\mathcal{H}_g^{g-1}$ is geometrically irreducible.
\end{conjecture}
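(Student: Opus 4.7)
The plan is to attempt an induction on the genus $g$, with base case $g = 2$ established via Oort's result on $\mathcal{A}_g^{g-1}$ together with the fact that the generic point of $\mathcal{A}_2^1$ is a hyperelliptic Jacobian. The core of the argument would combine the purity of $\overline{\mathcal{H}}_g^{g-1}$ (codimension $1$ in the irreducible space $\overline{\mathcal{H}}_g$, which has dimension $2g-1$), the intersection of each component with the boundary of the Deligne-Mumford compactification, and the inductive hypothesis on $\mathcal{H}_{g-1}^{g-2}$.

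Assume $\mathcal{H}_{g'}^{g'-1}$ is geometrically irreducible for $2 \leq g' < g$, and let $Z \subseteq \overline{\mathcal{H}}_g^{g-1}$ be an irreducible component, necessarily of dimension $2g - 2$. By \cite{AP11}*{Lemma 3.2}, $Z$ meets the smooth locus $\mathcal{H}_g^{g-1}$. I would target the boundary divisor $\Delta_1[\mathcal{H}_g]^{g-1} \subset \partial \overline{\mathcal{H}}_g^{g-1}$, parametrizing an elliptic curve $E$ clutched to a genus-$(g-1)$ hyperelliptic curve $C_0$ at a Weierstrass point, with $f_E + f_{C_0} = g - 1$. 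This decomposes into two substrata: one with $E$ ordinary and $f_{C_0} = g - 2$, the other with $E$ supersingular and $C_0$ ordinary. Each substratum has dimension $2g - 3$, i.e.\ codimension $1$ in any component of $\overline{\mathcal{H}}_g^{g-1}$, and each is geometrically irreducible by the inductive hypothesis together with the irreducibility of $\mathcal{H}_{g-1}$ and $\mathcal{H}_1$.

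The two main steps would then be (a) show that every component $Z$ of $\overline{\mathcal{H}}_g^{g-1}$ meets the closure of at least one of these two substrata, and (b) show that meeting it forces $Z$ to be uniquely determined. For (a), one can degenerate along $Z$ by colliding two branch points of a representative model $y^2 = f(x)$ at a Weierstrass point and track the $p$-rank through the Hasse-Witt matrix as presented in \cref{Sec:HasseWitt}, using a dimension count to ensure that the degeneration stays inside $\overline{\mathcal{H}}_g^{g-1}$ rather than dropping to a deeper $p$-rank stratum. Step (b) is more delicate and amounts to a local irreducibility statement: the formal neighborhood of a generic point of either substratum inside $\overline{\mathcal{H}}_g^{g-1}$ should be irreducible, which should reduce to a deformation-theoretic calculation describing how the Hasse-Witt matrix of a smoothing of the clutched curve varies in the family.

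The principal obstacle is precisely step (b). It is entirely possible for two distinct components of the interior to meet along a common boundary divisor, so local irreducibility at the boundary is essential and not automatic. An alternative route would circumvent the boundary entirely via a monodromy argument: show that the geometric monodromy representation of $\pi_1(\mathcal{H}_g^{g-1})$ on suitable level structures is as large as possible, which would imply connectedness of $\mathcal{H}_g^{g-1}$ directly; this was the strategy pursued in \cite{AP11}. A third possibility is to exploit the Torelli morphism with Oort's irreducibility of $\mathcal{A}_g^{g-1}$, but for $g \geq 4$ the hyperelliptic locus is a strict subvariety of $\mathcal{A}_g$ and the generic point of $\mathcal{A}_g^{g-1}$ is not hyperelliptic, so transferring irreducibility back to $\mathcal{H}_g^{g-1}$ requires an independent argument. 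The contrast with $\mathcal{H}_g^0$, whose number of components is conjecturally unbounded, signals that the codimension-one nature of the non-ordinary condition is doing real work and should be exploited in any successful proof.
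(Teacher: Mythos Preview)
The first thing to note is that the paper does \emph{not} prove this statement: it is explicitly labeled a conjecture, and the paper's support for it is entirely heuristic. The evidence consists of sampling hyperelliptic curves over $\mathbb{F}_{p^r}$ for $3 \leq g \leq 20$, $1 \leq r \leq 4$, and a range of primes, computing $p$-ranks via Hasse--Witt matrices, and observing that the Lang--Weil estimate $M_{g,p,r} = (N/s)\cdot p^r$ for the number of components is consistently close to~$1$. There is no proof in the paper to compare your proposal against.

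Your proposal is not a proof either, and to your credit you say so. The inductive scheme via the boundary divisor $\Delta_1[\mathcal{H}_g]^{g-1}$ is a natural line of attack, and step~(a) is plausible given the purity result of \cite{GP05} and the boundary-intersection lemma of \cite{AP11}. But step~(b), the local irreducibility of $\overline{\mathcal{H}}_g^{g-1}$ at a generic boundary point, is exactly the missing ingredient, and you have not supplied it. The deformation-theoretic calculation you allude to---tracking how the Hasse--Witt matrix varies in a smoothing family---would need to show that the non-ordinary condition cuts out a single formal branch, and there is no indication of how to do this. The monodromy route you mention is the one actually pursued in \cite{AP11}, and that paper did not resolve the question; so invoking it as an ``alternative'' does not close the gap. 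In short, your outline identifies reasonable strategies and the correct obstacle, but it remains a research program rather than a proof, which is consistent with the statement's status as an open conjecture.
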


Recall that $\mathcal{A}_2^1$ is geometrically irreducible by \cite{Oor01}*{Corollary 1.5}, and the geometric irreducibility of $\mathcal{H}_2^1$ follows from the fact that the generic point of $\mathcal{A}_2^1$ corresponds to the Jacobian of a smooth hyperelliptic curve.  Thus, this conjecture is a generalization of the known result for $g=2$.

\section{Other $p$-Rank Strata} \label{sec:f<g-1}

\subsection{$p$-Rank At Least 1} \label{sec:f=g-2}
A major benefit of our computations is that we not only count curves of $p$-rank $g-1$, but of all $p$-ranks $f \leq g - 1$.  Consequently, we were able to extend our study to other $p$-rank strata.  The method described in \cref{Sec:Family} was also applied to gather evidence for the number of components of $\mathcal{H}_g^{g-2}$ for $3 \leq g \leq 20$. This yielded heuristics that are very similar to the heuristics in \cref{Sec:Family_Data}.  Additionally, the Galois type method described in \cref{Sec:GaloisType} was applied for $g=3$. Again, we were able to exclude curves with extra automorphisms, this time by the following argument. 

Let $\mathcal{H}_g^{\mathrm{aut}}$ be the locus of $\mathcal{H}_g$ consisting of curves whose automorphism group is not $\mathbb{Z}/2\mathbb{Z}$. By \cite{GV08}*{Proposition 2.1}, this locus is irreducible of dimension $g$, and its generic point corresponds to a hyperelliptic curve with an extra involution. Any such curve admits a model $C \colon y^2= h(x^2)$. Then, \cite{KR89}*{Theorem C} yields the following decomposition up to isogeny:
\[\mathrm{Jac}(C) \sim \mathrm{Jac}(D_1) \times \mathrm{Jac}(D_2).\]
Here, $D_1$ and $D_2$ are given by $D_1 \colon y^2 = h(x)$ and $D_2 \colon  y^2 = xh(x)$. By picking $h(x)$ appropriately, we can arrange for $D_2$ to be ordinary; hence, $f_C \geq f_{D_2} = g_{D_2} = \left \lceil \frac{g}{2} \right \rceil$. Thus, the generic point of $\mathcal{H}_g^{\mathrm{aut}}$ corresponds to a curve with $p$-rank at least $2$, implying that $\mathcal{H}_g^{\mathrm{aut}}$ cannot contain a component of $\mathcal{H}_g^f$ when $f\leq 1$. Moreover, $\mathcal{H}_g^{\mathrm{aut}}$ cannot contain a component of $\mathcal{H}_g^f$ when $f\geq 2$ since $\dim(\mathcal{H}_g^f)=g-1+f$.

To count the number of curves with $p$-rank $f \leq g-2$, we employed a similar method to that described at the start of \cref{sec:nonord}.  Since $\mathcal{H}_g^{g-2}$ has codimension $2$, for each sample $S \subseteq \mathcal{H}_g(\mathbb{F}_{p^r})$, we computed the ratio 
\begin{equation*} \label{eq:Mg-2}
M_{g,g-2,p,r} = \frac{\# (\mathcal{S} \cap \mathcal{H}_g^{g-2})} {\#S} p^{2r}.
\end{equation*}
We then calculated the median, mean, minimum, maximum, and standard deviation on the data sets consisting of values of $M_{g,g-2,p,r}$ for every prime $p$ in a certain prime range for a given field of definition.  

\begin{table}[b]
    \small
    \caption{Statistics for data sets of $M_{g, g-2, p, 1}$ for $3 \leq g \leq 20$ collected using the family method.}
    \label{table:f=g-2}
    \tabcolsep=10pt
    \def\arraystretch{1.2}
    \renewcommand{\familydefault}{\ttdefault}
    \makebox[\textwidth][c]{
    \begin{tabular}{cc S[table-format=8.0]  S[table-format=1.3] S[table-format=1.3] S[table-format=1.3] S[table-format=1.3] S[table-format=1.3]}
        \toprule
        {Genus} & {Prime Range} & {Sample Size} & {Median} & {Mean} & {Min.} & {Max.} & {Std. Dev.}\\
        \midrule 
        3 & 5 - 1000 & 10000000* & 0.992 & 0.974 & 0.294 & 1.643 & 0.195 \\
        4 & 5 - 1000 & 10000000* & 1.014 & 0.999 & 0.381 & 1.604 & 0.182 \\
        5 & 3 - 1000 & 10000000* & 1.010 & 0.992 & 0.432 & 1.704 & 0.175 \\
        6 & 5 - 500 & 10000000 & 1.036 & 1.031 & 0.219 & 2.062 & 0.287 \\
        7 & 7 - 500 & 10000000 & 1.030 & 1.012 & 0.202 & 1.688 & 0.244 \\
        8 & 3 - 500 & 10000000 & 1.009 & 1.035 & 0.425 & 1.930 & 0.272 \\
        9 & 3 - 500 & 10000000 & 1.046 & 1.066 & 0.303 & 2.313 & 0.304 \\
        10 & 5 - 500 & 10000000 & 1.007 & 0.962 & 0.429 & 1.867 & 0.270 \\
        11 & 3 - 500 & 10000000 & 1.037 & 0.990 & 0.322 & 1.913 & 0.266 \\
        12 & 3 - 500 & 10000000 & 1.005 & 1.020 & 0.294 & 2.062 & 0.256 \\
        13 & 5 - 500 & 10000000 & 0.987 & 1.030 & 0.213 & 2.118 & 0.313 \\
        14 & 3 - 500 & 10000000 & 1.044 & 1.018 & 0.000 & 1.756 & 0.276 \\
        15 & 3 - 500 & 10000000 & 1.043 & 1.051 & 0.269 & 1.745 & 0.251 \\
        16 & 5 - 500 & 10000000 & 1.034 & 1.004 & 0.144 & 1.766 & 0.308 \\
        17 & 3 - 500 & 10000000 & 0.998 & 0.965 & 0.229 & 1.809 & 0.266 \\
        18 & 3 - 500 & 10000000 & 0.991 & 0.999 & 0.201 & 1.816 & 0.235 \\
        19 & 5 - 500 & 10000000 & 0.997 & 0.996 & 0.229 & 1.913 & 0.250 \\
        20 & 3 - 500 & 10000000 & 1.042 & 1.020 & 0.218 & 2.372 & 0.274 \\
        \bottomrule
    \end{tabular}
    }
\end{table}

\cref{table:f=g-2} illustrates the above statistics for $M_{g,g-2,p,1}$ for $3 \leq g \leq 20$ collected using the family method.  As mentioned above, we also computed similar statistics using the Galois type method.  These statistics corroborated the results determined using the family method and so are not included in the table; they can be found in full in \cite{Code}.  Similar to \cref{table:Family3,table:Family4}, we note that since $r = 1$, we were able to collect data for the entire family of curves for a small number of primes $p$.  The sample sizes in these cases are once again noted with an asterisk (*) to indicate that for certain $p$, the entire family exceeded the sample size listed. 

\cref{table:f=g-2} suggests irreducibility of $\mathcal{H}_g^{g-2} \otimes \Spec(\mathbb{F}_{p})$ for the considered values of $g$ and $p$. Additional data for $r = 2$ is available in \cite{Code} and exhibits similar behavior to the $r = 1$ case for the smaller prime ranges we could compute.  We note that the minima are smaller and the maxima are larger in this data than in the non-ordinary data.  We attribute this to the rarity of curves with desired $p$-rank.

\cref{fig:f=g-2} demonstrates how the error of $M_{3,1,p,1}$ behaves. The spread of the values of $M_{3,1,p,1}$ widens as $p$ grows, simply because the curves of $p$-rank at most $g-2$ become increasingly rare.  For example, in the $g = 3$ case, there are \texttt{1001} curves with $p$-rank $f \leq 1$ when $p = 103$, but only \texttt{10} such curves when $p = 883$.  Given this explanation along with the data available, we feel confident making the following conjecture.

\begin{figure}[b]
\begin{tikzpicture}
\begin{axis}[
    width={0.9\textwidth},
    xlabel={$p$},
    xmin=0, xmax=1000,
    ymin=0, ymax=2,
    ymajorgrids=true,
    grid style=dashed,
    ytick = {0.25, 0.5, 0.75, 1, 1.25, 1.5, 1.75}, 
    point meta min=0.5,
    point meta max=1.5,
    point meta=y,
    ylabel={$M_{3, 1, p, 1}$},
    ylabel style={rotate=-90}
]
\addplot+[
    colormap name=blueorange-mid,
    colormap access=piecewise constant, 
    scatter,
    mark=*,
    only marks,
    ]
    coordinates {
        (5, 1.107702795)
        (11, 1.092210318)
        (13, 1.076225545)
        (17, 1.030371594)
        (19, 1.033624444)
        (23, 1.015032934)
        (27, 0.9774057341)
        (29, 1.046446924)
        (31, 1.022832069)
        (37, 1.013413341)
        (41, 1.033356126)
        (43, 1.013705673)
        (47, 1.026455047)
        (53, 1.0067456)
        (59, 1.0080976)
        (61, 1.0106236)
        (67, 0.9790509)
        (71, 1.0752453)
        (73, 0.9954572)
        (79, 1.0160348)
        (83, 0.9995939)
        (89, 0.9679462)
        (97, 1.0227583)
        (101, 0.9272709)
        (103, 1.0619609)
        (107, 0.9983528)
        (109, 1.0300827)
        (113, 0.9436291)
        (127, 1.0451592)
        (131, 0.9695965)
        (137, 1.0191567)
        (139, 0.9873031)
        (149, 0.9635234)
        (151, 0.9188803)
        (157, 0.9933547)
        (163, 0.9936806)
        (167, 1.0095818)
        (173, 0.9667067)
        (179, 0.9195767)
        (181, 0.9205841)
        (191, 1.0105237)
        (193, 0.856727)
        (197, 1.0323194)
        (199, 0.9425038)
        (211, 0.979462)
        (223, 0.9796613)
        (227, 0.9842039)
        (229, 0.9911349)
        (233, 0.7817616)
        (239, 1.0795869)
        (241, 0.9409122)
        (251, 1.1277179)
        (257, 1.0105497)
        (263, 1.1689561)
        (269, 1.1794843)
        (271, 0.8666038)
        (277, 0.920748)
        (281, 0.8843632)
        (283, 1.1052282)
        (293, 1.030188)
        (307, 0.8199663)
        (311, 1.0058984)
        (313, 0.8425334)
        (317, 1.0149389)
        (331, 1.0189173)
        (337, 0.9426227)
        (347, 1.0475583)
        (349, 0.9135075)
        (353, 0.8598021)
        (359, 0.8892789)
        (367, 0.7677273)
        (373, 0.9182514)
        (379, 0.9767588)
        (383, 0.9534785)
        (389, 0.907926)
        (397, 0.9614149)
        (401, 0.8844055)
        (409, 0.836405)
        (419, 0.8953611)
        (421, 1.0988942)
        (431, 1.2445987)
        (433, 0.8999472)
        (439, 1.0792376)
        (443, 1.1578691)
        (449, 0.8265641)
        (457, 1.1904393)
        (461, 0.9988487)
        (463, 0.8360391)
        (467, 1.0032094)
        (479, 1.0783727)
        (487, 0.948676)
        (491, 1.1089726)
        (499, 0.7719031)
        (503, 0.8855315)
        (509, 1.0881402)
        (521, 0.814323)
        (523, 0.6564696)
        (541, 1.1414559)
        (547, 0.8078643)
        (557, 1.0238217)
        (563, 0.6656349)
        (569, 1.1331635)
        (571, 0.9455189)
        (577, 0.9654941)
        (587, 1.2404484)
        (593, 0.9142874)
        (599, 1.2199234)
        (601, 0.9391226)
        (607, 0.9948123)
        (613, 0.6763842)
        (617, 1.3324115)
        (619, 0.6513737)
        (631, 1.194483)
        (641, 0.7806739)
        (643, 0.9922776)
        (647, 0.7953571)
        (653, 1.0660225)
        (659, 0.868562)
        (661, 0.5679973)
        (673, 0.7699793)
        (677, 1.1916554)
        (683, 0.6530846)
        (691, 1.0027101)
        (701, 0.8353817)
        (709, 1.3572387)
        (719, 0.9305298)
        (727, 0.6870877)
        (733, 0.4835601)
        (739, 1.092242)
        (743, 1.1593029)
        (751, 0.7332013)
        (757, 0.9741833)
        (761, 0.6370331)
        (769, 0.8279054)
        (773, 0.9560464)
        (787, 1.0529273)
        (797, 0.8257717)
        (809, 0.7853772)
        (811, 1.2496699)
        (821, 0.8762533)
        (823, 1.354658)
        (827, 1.2310722)
        (829, 1.5806543)
        (839, 1.1262736)
        (853, 0.9458917)
        (857, 0.2937796)
        (859, 0.5165167)
        (863, 0.9681997)
        (877, 0.769129)
        (881, 1.1642415)
        (883, 0.779689)
        (887, 1.0227997)
        (907, 0.9049139)
        (911, 0.7469289)
        (919, 1.1823854)
        (929, 1.2082574)
        (937, 0.5267814)
        (941, 1.0625772)
        (947, 0.4484045)
        (953, 0.6357463)
        (967, 1.4026335)
        (971, 1.0371251)
        (977, 1.3363406)
        (983, 1.6426913)
        (991, 1.5713296)
        (997, 0.994009)
    };
\end{axis}
\end{tikzpicture}
\caption{Values of $M_{3,1,p,1}$ for each prime $p$ collected using the family method illustrating how the spread widens as $p$ grows.}
\label{fig:f=g-2}
\end{figure}
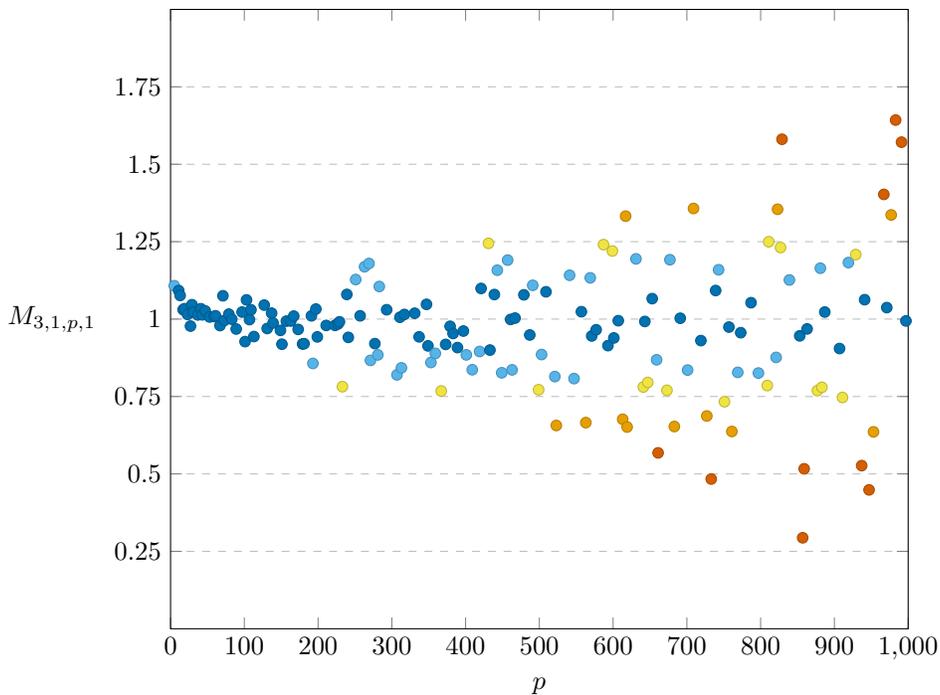

\begin{conjecture} \label{conj: f=g-2}
For $g\geq 3$ and $p\geq 3$, the moduli space $\mathcal{H}_g^{g-2}$ is geometrically irreducible. 
\end{conjecture}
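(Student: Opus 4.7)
The plan is to adapt the boundary-intersection strategy suggested by \cite{AP11}*{Section 3.7} and proceed by induction on $g$. The base case $g=3$ asks for irreducibility of $\mathcal{H}_3^1$; here the codimension-one stratum in $\mathcal{H}_3$ can be attacked directly, since the sublocus is cut out by the vanishing of a single explicit polynomial in the coefficients of $f(x)$ derived from the Hasse-Witt matrix formula in \cref{Sec:HasseWitt}, and one may hope to check irreducibility of this hypersurface by a primary-decomposition style argument (or by showing that the complement in $\mathcal{H}_3$ is connected in codimension one using monodromy of the universal $p$-torsion over the hyperelliptic locus). A cleaner, more structural reduction is available through $\mathcal{A}_3^1$: although the generic point of $\mathcal{A}_3^1$ need not be a Jacobian, its hyperelliptic sublocus is itself of codimension one in $\mathcal{H}_3$, and combined with the irreducibility of $\mathcal{A}_3^1$ from \cite{Cha05}, one can try to lift irreducibility via the Torelli morphism.

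For the inductive step, first I would invoke the purity of $\overline{\mathcal{H}}_g^{g-2}$ from \cite{GP05}*{Proposition 2.1} to conclude that every irreducible component $Z$ of $\overline{\mathcal{H}}_g^{g-2}$ has nonempty intersection with the boundary $\partial \overline{\mathcal{H}}_g$. Since $\partial \overline{\mathcal{H}}_g$ has codimension one, each component of $Z\cap \partial\overline{\mathcal{H}}_g$ has codimension at most one in $Z$; I would then argue that $Z$ must meet one of the clutching strata $\Delta_i[\mathcal{H}_g]^{g-2}$, each of which parametrizes a genus-$i$ and a genus-$(g-i)$ hyperelliptic component clutched at a Weierstrass point whose $p$-ranks add up to $g-2$. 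The inductive hypothesis applied to these lower-genus hyperelliptic $p$-rank strata, combined with the irreducibility of the Deuring mass locus (a finite set at the elliptic level), would identify the possible components. The second main step would be to connect any two such boundary appearances within a single component of $\overline{\mathcal{H}}_g^{g-2}$, by exhibiting a chain-of-elliptic-curves degeneration lying in the closure of each.

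The principal obstacle sits exactly in this last gluing step, and this is the reason the statement is only conjectural. To match two components of $\overline{\mathcal{H}}_g^{g-2}$ intersecting distinct boundary loci, one needs information about the local deformation theory at a compact-type degeneration with prescribed $p$-rank, controlled enough to produce a smoothing that stays inside $\overline{\mathcal{H}}_g^{g-2}$ and connects the two components. The analogous analysis for $\mathcal{A}_g^f$ relies on the Chai-Oort description of Ekedahl-Oort strata and on deep monodromy results for the universal $p$-divisible group over $\mathcal{A}_g$; no such toolkit currently exists on the hyperelliptic sublocus, and the failure of the generic point of $\mathcal{A}_g^f$ to be hyperelliptic for $g\geq 3$ prevents a direct pullback. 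A genuinely new ingredient would be needed, most plausibly either a Chai-Oort style purity/monodromy theorem restricted to $\mathcal{H}_g$, or an explicit geometric construction producing enough chain-of-elliptic-curves limits to witness connectedness, in the spirit of the explicit equations underpinning \cref{prop:algo}.
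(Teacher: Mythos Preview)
The statement you are addressing is a \emph{conjecture} in the paper, not a theorem: the paper offers no proof, only computational heuristics via Lang--Weil point counts (\cref{eq: cgfpr approx}) over many primes and field extensions. So there is no ``paper's own proof'' to compare against, and what you have written is not a proof either---it is, as you yourself acknowledge, an outline of a strategy whose decisive step is missing.

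That said, your outline contains a concrete error in the base case. You write that $\mathcal{H}_3^1$ is ``the codimension-one stratum in $\mathcal{H}_3$'' and is ``cut out by the vanishing of a single explicit polynomial.'' But $\mathcal{H}_g^{g-2}$ has codimension $g-(g-2)=2$, not $1$; in particular $\mathcal{H}_3^1$ has dimension $g+f-1=3$ inside the $5$-dimensional $\mathcal{H}_3$. It is not a hypersurface and is not defined by a single equation coming from the Hasse--Witt matrix. Your alternative reduction through $\mathcal{A}_3^1$ repeats the same miscount. This matters because the whole appeal of the base case---that a hypersurface in an irreducible variety is often irreducible---evaporates in codimension two.

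Your inductive step also leans on facts that are either unavailable or false. The clutching loci $\Delta_i[\mathcal{H}_g]^{g-2}$ involve products of strata $\mathcal{H}_i^{f_1}\times\mathcal{H}_{g-i}^{f_2}$ with $f_1+f_2=g-2$, and these are almost never of the form $\mathcal{H}_{g'}^{g'-2}$, so the inductive hypothesis does not apply to them. Worse, when an elliptic factor appears with $p$-rank $0$, the relevant piece is $\mathcal{H}_1^0$, which by Deuring--Eichler has roughly $(p-1)/12$ geometric points and is therefore highly reducible; this is precisely the mechanism behind the paper's \cref{conj:g=3 f=0}. So the boundary does not obviously have few components, and the connectedness step you flag as ``the principal obstacle'' is indeed the entire content of the conjecture. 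Your honest identification of that gap is correct; the surrounding scaffolding, however, would need to be rebuilt before it could support a proof.
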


Since curves of even smaller $p$-rank are very rare, it is difficult to obtain reliable heuristics for the number of components of $\mathcal{H}_g^f$ when $f < g-2$. For $3 \leq p < 100$, our data indicates that $\mathcal{H}_4^1 \otimes \Spec(\mathbb{F}_{p^r})$ and $\mathcal{H}_5^2 \otimes \Spec (\mathbb{F}_{p^r})$ are irreducible, though it is computationally infeasible to obtain statistically significant evidence. The following question aims to generalize both known theoretical results and our computational heuristics.

\begin{question} \label{q:1<=f<=g}
For $1\leq f \leq g$ and $p \geq 3$, is $\mathcal{H}_g^f$ always geometrically irreducible?   
\end{question}

The answer to this question is known to be affirmative when $f = g$ or when $(f,g)=(1,2)$. The heuristics we provide in this paper form strong evidence that the answer is also affirmative when $g-f$ is small. It seems the remaining cases are currently out of reach, both theoretically and computationally.

\subsection{$p$-Rank 0} We conclude with a brief discussion about the $p$-rank 0 locus.  Recall, in the $g=1$ case, the Deuring-Eichler mass formula gives the exact number of points of $\mathcal{H}_1^0$, which is roughly $\frac{p-1}{12}$. That is, the number of points grows with $p$.  Moreover, in the $g = 2$ case, \cite{IKO86}*{Remarks 2.16 and 2.17} show that the number of geometric components of $\mathcal{H}_2^0$ goes to infinity as $p$ increases.  \cref{fig:g=3f=0} illustrates our computational findings, showing the values of $M_{3,0,p,1}$ for each prime $p$, collected using both sampling methods.  The trend of the data indicates that the number components of $\mathcal{H}_3^0 \otimes \mathrm{Spec}(\overline{\mathbb{F}}_p)$ is also growing with $p$. 

\begin{figure}[b]
\begin{tikzpicture}
\begin{axis}[
    width={0.9\textwidth},
    xlabel={$p$},
    xmin=0, xmax=150,
    ymin=0, ymax=7,
    ymajorgrids=true,
    grid style=dashed,
    ytick = {1, 2, 3, 4, 5, 6}, 
    point meta min=0,
    point meta max=50,
    point meta=y,
    ylabel={$M_{3, 0, p, 1}$},
    ylabel style={rotate=-90},
    legend pos=south east,
    legend cell align=left,
]
\addplot+[
    colormap name=onlyorange,
    colormap access=const, 
    scatter,
    mark=*,
    only marks,
    forget plot,
    ]
    coordinates {
        (3, 1.104089219)
        (5, 1.114013522)
        (7, 1.499213241)
        (11, 1.631047096)
        (13, 1.441130168)
        (17, 1.937906552)
        (19, 2.171254978)
        (23, 2.633952461)
        (29, 2.663891076)
        (31, 2.863822166)
        (37, 3.043794041)
        (41, 2.931627951)
        (43, 3.090809802)
        (47, 3.896831764)
    };
\addplot+[
    colormap name=onlyblue,
    colormap access=const, 
    scatter,
    mark=*,
    only marks,
    forget plot,
    ]
    coordinates{
        (5, 1.107702795)
        (11, 1.044101054)
        (13, 1.079329296)
        (17, 1.579766954)
        (19, 1.45012712)
        (23, 1.81809873)
        (27, 1.030454212)
        (29, 2.162509711)
        (31, 1.905451273)
        (37, 2.188319931)
        (41, 2.276100551)
        (43, 2.173700947)
        (47, 2.669441633)
        (53, 2.9328769)
        (59, 3.5735946)
        (61, 2.7918663)
        (67, 3.308393)
        (71, 3.7580655)
        (73, 3.501153)
        (79, 2.9089301)
        (83, 4.574296)
        (89, 4.0183233)
        (97, 3.1943555)
        (101, 3.9151438)
        (103, 4.1523626)
        (107, 5.6351978)
        (109, 5.6981276)
        (113, 5.4830086)
        (127, 5.5306341)
        (131, 5.8450366)
        (137, 6.9426531)
        (139, 4.0284285)
        (149, 6.2851031)
    };
    
\addlegendimage{only marks, mark=*, color=darkblue, draw=outlineblue}
\addlegendentry{\ Family Method}

\addlegendimage{only marks, mark=*, color=darkorange, draw=outlineorange}
\addlegendentry{\ Galois Type Method}
    
\end{axis}
\end{tikzpicture}
\caption{Values of $M_{3,0,p,1}$ for each prime $p$, collected using both sampling methods.}
\label{fig:g=3f=0}
\end{figure}
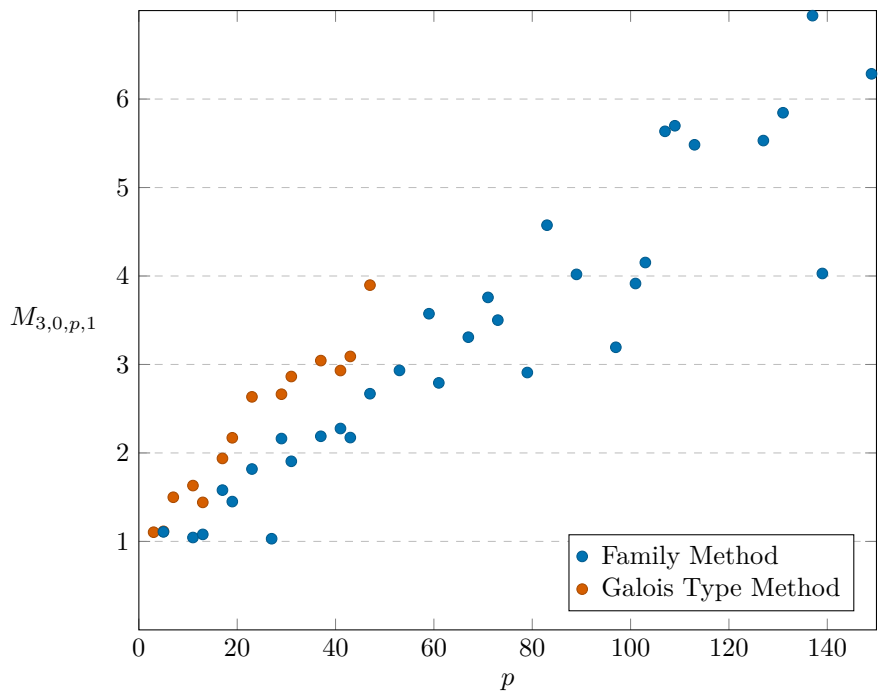

The data appears to be growing with $p$, but curiously, the values of $M_{3, 0, p, 1}$ are not concentrated around integers.  We note that a similar phenomenon occurs when running experiments on genus 2 curves, possible thanks to \cite{How25}.  That is, we once again see non-integral values when $r = 1$ for those curves.  However, since there are fewer curves than in higher genera, it is possible to enumerate them over $\mathbb{F}_{p^2}$ for small primes $p$.  What results are values of $M_{2, 0, p, 2}$ that are much closer to integers.  We have no explanation as to why this phenomenon occurs when $r = 1$, but given that we see similar behavior in smaller genera, we feel confident in the validity of our techniques. In summary, our data warrants the following conjecture.

\begin{conjecture} \label{conj:g=3 f=0}
The number of geometric components of $\mathcal{H}_3^0$ goes to infinity as $p$ grows.
\end{conjecture}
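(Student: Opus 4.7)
The plan is to generalize the strategy of \cite{IKO86}, where the case $g=2$ is handled via a class-number calculation on the supersingular locus. First, I would reduce to a supersingular sublocus: via the Torelli map $\mathcal{H}_3 \hookrightarrow \mathcal{A}_3$, the intersection $\mathcal{H}_3 \cap \mathcal{S}_3$ sits inside $\mathcal{H}_3^0$, where $\mathcal{S}_3 \subseteq \mathcal{A}_3$ denotes the supersingular locus. By work of Li--Oort, $\mathcal{S}_3$ is equidimensional of dimension $\lfloor g^2/4 \rfloor = 2$, matching the dimension of $\mathcal{H}_3^0$, and its number of geometric components is controlled by class numbers of positive-definite quaternion Hermitian lattices, which grow (roughly polynomially) with $p$. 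It would therefore suffice to show that $\mathcal{H}_3 \cap \mathcal{S}_3$ inherits a growing number of irreducible components.

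Second, I would exploit the boundary structure of the compactification. Since $\overline{\mathcal{H}}_3^0$ is pure of dimension $2$ and each component meets $\mathcal{H}_3^0$ by \cite{AP11}*{Lemma 3.2}, successive degeneration inside the boundary reaches points of the deep stratum consisting of chains $E_1 \cup E_2 \cup E_3$ of three supersingular elliptic curves clutched at Weierstrass points. The number of such chains (up to reordering) grows like $(p/12)^3$, far exceeding the $p^2$ one would expect the $\mathbb{F}_p$-points of a single component to contribute by \cref{thm: Lang-Weil}. Each chain also carries a finite discrete gluing parameter. This discrete data gives a natural candidate for distinguishing components.

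The main obstacle is the separation of components: a priori, two different boundary chains might lie in the closure of a single component of $\mathcal{H}_3^0$. This is precisely the delicate point in \cite{IKO86}, where the fact that the superspecial locus of $\mathcal{A}_2$ consists of isolated points, combined with a careful analysis of local deformations and automorphism groups, allows one to conclude that distinct superspecial Jacobians yield distinct components. For $g=3$, one would need a discrete invariant of the generic point of each component---perhaps a refined Ekedahl--Oort type, an $a$-number refinement, or the isomorphism class of a distinguished superspecial boundary point---that is provably constant on irreducible components and takes on unboundedly many values as $p \to \infty$. Producing such an invariant appears out of reach with current techniques, which is why the statement remains conjectural and is supported here only by the numerical evidence in \cref{fig:g=3f=0}.

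A complementary route would be to aim directly at a lower bound of the form $\#\overline{\mathcal{H}}_3^0(\mathbb{F}_p) \gg p^2 h(p)$ with $h(p) \to \infty$; by \cref{thm: Lang-Weil} this forces $c_{3,0,p} \to \infty$. One could try to achieve such a bound by exhibiting an explicit $p$-parameterized family of pairwise geometrically non-isomorphic $p$-rank-$0$ hyperelliptic curves of genus $3$ whose cardinality is unbounded in $p$---for instance, by restricting to curves with extra automorphisms of the form $y^2 = x^7 + ax^5 + bx^3 + cx$, on which the Hasse--Witt matrix has a block structure and the $p$-rank-$0$ condition becomes more tractable. Even this direct approach seems to require substantial new input, and any genuine proof will likely combine the superspecial mass-formula machinery with delicate deformation theory on the hyperelliptic locus.
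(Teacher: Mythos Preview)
The statement you are addressing is a \emph{conjecture} in the paper, not a theorem: the paper offers no proof, only the numerical evidence displayed in \cref{fig:g=3f=0} together with the analogy to the known behavior of $\mathcal{H}_1^0$ and $\mathcal{H}_2^0$. There is therefore no ``paper's proof'' against which to compare your attempt.

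Your submission is likewise not a proof, and you say so yourself: you outline two plausible strategies (descent to the supersingular locus via Li--Oort component counts, and a direct Lang--Weil lower bound via explicit families), and in each case you correctly identify the blocking step---namely, that distinct superspecial boundary configurations need not lie on distinct components of $\mathcal{H}_3^0$, and that no discrete invariant is currently known to separate them. This is an honest and well-informed research sketch, but it should not be labeled a proof proposal. The genuine gap is exactly the one you name: without a component-separating invariant (or an unconditional point-count lower bound of order $\gg p^{2}h(p)$ with $h(p)\to\infty$), neither route closes. Your write-up is consistent with the paper's own stance that the statement is open.
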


As previously noted, this conjecture generalizes the theoretically known behavior for $\mathcal{H}_1^0$ and $\mathcal{H}_2^0$. An interesting question to ask is whether this phenomenon persists for $g\geq 4$; however, the answer to this question currently seems out of reach, both theoretically and computationally.

\begin{question} \label{q:f=0}
For $g\geq 1$, does the number of geometric components of $\mathcal{H}_g^0$ go to infinity as $p$ grows?
\end{question}

\bibliographystyle{alpha} 
\begin{bibdiv}
     \begin{biblist}
        \bib{AH19}{incollection}{
              author={Achter, Jeffrey~D.},
              author={Howe, Everett~W.},
               title={Hasse-{W}itt and {C}artier-{M}anin matrices: a warning and a request},
                date={2019},
           booktitle={Arithmetic {G}eometry: {C}omputation and {A}pplications},
              series={Contemp. Math.},
              volume={722},
           publisher={Amer. Math. Soc., [Providence], RI},
               pages={1\ndash 18},
                 url={https://doi.org/10.1090/conm/722/14534},
              review={\MR{3896846}},
        }
        
        \bib{AP11}{article}{
              author={Achter, Jeffrey~D.},
              author={Pries, Rachel},
               title={The {$p$}-rank strata of the moduli space of hyperelliptic curves},
                date={2011},
                ISSN={0001-8708,1090-2082},
             journal={Adv. Math.},
              volume={227},
              number={5},
               pages={1846\ndash 1872},
                 url={https://doi.org/10.1016/j.aim.2011.04.004},
              review={\MR{2803789}},
        }

         \bibitem[Magma]{magma} 
         W. Bosma, J. J. Cannon, C. Fieker, A. Steel (eds.), \href{https://magma.maths.usyd.edu.au/magma/handbook/}{\textit{Handbook of Magma functions}}, Version 2.28-14 (2024).

        \bib{Code}{misc}{
            author={Bouchet, Thomas},
             author={Davis, Erik},
             author={Groen, Steven R.},
             author={Porat, Zachary},
             author={York, Benjamin},
            title={GitHub repository for related Magma scripts and resulting data sets},
             note={Available at \href{https://github.com/Thittho/Non-ordinary-locus}{\texttt{https://github.com/Thittho/Non-ordinary-locus}} (updated 2025)},
             label={BDGPY25}
        }

        \bib{Cha05}{article}{
              author={Chai, Ching-Li},
               title={Monodromy of {H}ecke-invariant subvarieties},
                date={2005},
                ISSN={1558-8599,1558-8602},
             journal={Pure Appl. Math. Q.},
              volume={1},
              number={2},
               pages={291\ndash 303},
                 url={https://doi.org/10.4310/PAMQ.2005.v1.n2.a4},
              review={\MR{2194726}},
        }
        
        \bib{GP05}{article}{
              author={Glass, Darren},
              author={Pries, Rachel},
               title={Hyperelliptic curves with prescribed {$p$}-torsion},
                date={2005},
                ISSN={0025-2611,1432-1785},
             journal={Manuscripta Math.},
              volume={117},
              number={3},
               pages={299\ndash 317},
                 url={https://doi.org/10.1007/s00229-005-0559-0},
              review={\MR{2154252}},
        }
        
        \bib{GV08}{article}{
              author={Gorchinskiy, Sergey},
              author={Viviani, Filippo},
               title={Picard group of moduli of hyperelliptic curves},
                date={2008},
                ISSN={0025-5874,1432-1823},
             journal={Math. Z.},
              volume={258},
              number={2},
               pages={319\ndash 331},
                 url={https://doi.org/10.1007/s00209-007-0173-9},
              review={\MR{2357639}},
        }
        
        \bib{How25}{article}{
              author={Howe, Everett~W.},
               title={Enumerating hyperelliptic curves over finite fields in quasilinear time},
                date={2025},
                ISSN={2522-0160,2363-9555},
             journal={Res. Number Theory},
              volume={11},
              number={1},
               pages={paper no. 26},
                 url={https://doi.org/10.1007/s40993-024-00594-7},
              review={\MR{4852290}},
        }
        
        \bib{IKO86}{article}{
              author={Ibukiyama, Tomoyoshi},
              author={Katsura, Toshiyuki},
              author={Oort, Frans},
               title={Supersingular curves of genus two and class numbers},
                date={1986},
                ISSN={0010-437X,1570-5846},
             journal={Compositio Math.},
              volume={57},
              number={2},
               pages={127\ndash 152},
                 url={http://www.numdam.org/item?id=CM_1986__57_2_127_0},
              review={\MR{827350}},
        }
        
        \bib{KR89}{article}{
              author={Kani, E.},
              author={Rosen, M.},
               title={Idempotent relations and factors of {J}acobians},
                date={1989},
                ISSN={0025-5831},
             journal={Math. Ann.},
              volume={284},
              number={2},
               pages={307\ndash 327},
              review={\MR{1000113}},
        }
        
        \bib{Loc94}{article}{
              author={Lockhart, P.},
               title={On the discriminant of a hyperelliptic curve},
                date={1994},
                ISSN={0002-9947,1088-6850},
             journal={Trans. Amer. Math. Soc.},
              volume={342},
              number={2},
               pages={729\ndash 752},
                 url={https://doi.org/10.2307/2154650},
              review={\MR{1195511}},
        }
        
        \bib{LR12}{article}{
              author={Lercier, Reynald},
              author={Ritzenthaler, Christophe},
               title={Hyperelliptic curves and their invariants: geometric, arithmetic and algorithmic aspects},
                date={2012},
                ISSN={0021-8693,1090-266X},
             journal={J. Algebra},
              volume={372},
               pages={595\ndash 636},
                 url={https://doi.org/10.1016/j.jalgebra.2012.07.054},
              review={\MR{2990029}},
        }
        
        \bib{LW54}{article}{
              author={Lang, Serge},
              author={Weil, Andr\'e},
               title={Number of points of varieties in finite fields},
                date={1954},
                ISSN={0002-9327,1080-6377},
             journal={Amer. J. Math.},
              volume={76},
               pages={819\ndash 827},
                 url={https://doi.org/10.2307/2372655},
              review={\MR{65218}},
        }
        
        \bib{Oda69}{article}{
              author={Oda, Tadao},
               title={The first de {R}ham cohomology group and {D}ieudonn\'e{} modules},
                date={1969},
                ISSN={0012-9593},
             journal={Ann. Sci. \'Ecole Norm. Sup. (4)},
              volume={2},
               pages={63\ndash 135},
                 url={http://www.numdam.org/item?id=ASENS_1969_4_2_1_63_0},
              review={\MR{241435}},
        }
        
        \bib{Oor01}{incollection}{
              author={Oort, Frans},
               title={A stratification of a moduli space of abelian varieties},
                date={2001},
           booktitle={Moduli of abelian varieties},
           publisher={Birkh{\"a}user Basel},
             address={Basel},
               pages={345\ndash 416},
                 url={https://doi.org/10.1007/978-3-0348-8303-0_13},
        }
        
        \bib{Sil09}{book}{
              author={Silverman, Joseph~H.},
               title={The {A}rithmetic of {E}lliptic {C}urves},
             edition={2},
              series={Graduate Texts in Mathematics},
           publisher={Springer New York, NY},
                date={2009},
              review={\MR{2514094}},
        }
        
        \bib{Yui78}{article}{
              author={Yui, Noriko},
               title={On the {J}acobian varieties of hyperelliptic curves over fields of characteristic {$p>2$}},
                date={1978},
                ISSN={0021-8693},
             journal={J. Algebra},
              volume={52},
              number={2},
               pages={378\ndash 410},
                 url={https://doi.org/10.1016/0021-8693(78)90247-8},
              review={\MR{491717}},
        }
     \end{biblist}
 \end{bibdiv}
\end{document}